\title{Minimal codewords arising from the incidence of points and hyperplanes in projective spaces}
\author{Daniele Bartoli\thanks{Dipartimento di Matematica e Informatica, Universit\`a degli studi di Perugia,  Perugia, Italy. daniele.bartoli@unipg.it} \and Lins Denaux\thanks{Department of Mathematics: Analysis, Logic and Discrete Mathematics, Ghent University,  Ghent, Belgium. lins.denaux@ugent.be}}
\date{}
\newtheorem{thm}{Theorem}[section]
\newtheorem*{thm*}{Theorem}
\newtheorem{lm}[thm]{Lemma}
\newtheorem{crl}[thm]{Corollary}
\newtheorem{prop}[thm]{Proposition}
\theoremstyle{definition}
\newtheorem{df}[thm]{Definition}
\newtheorem{constr}[thm]{Construction}
\newtheorem{open}[thm]{Open Problem}
\newcommand{\NN}{\mathbb{N}}
\newcommand{\ZZ}{\mathbb{Z}}
\newcommand{\wt}{\textnormal{wt}}
\newcommand{\supp}{\textnormal{supp}}
\newcommand{\zero}{\mathbf 0}
\newcommand{\vspan}[1]{\left \langle #1 \right \rangle}
\newcommand{\pg}{\textnormal{PG}}
\newcommand{\mc}{\mathcal{C}}
\newcommand{\restr}[2]{{#1}_{|#2}}
\renewcommand{\geq}{\geqslant}
\renewcommand{\leq}{\leqslant}
\begin{document}

\maketitle
\begin{abstract}
Over the past few years, the codes $\mc_{n-1}(n,q)$ arising from the incidence of points and hyperplanes in the projective space $\pg(n,q)$ attracted a lot of attention. In particular, small weight codewords of $\mc_{n-1}(n,q)$ are a topic of investigation. The main result of this work states that, if $q$ is large enough and not prime, a codeword having weight smaller than roughly $\frac{1}{2^{n-2}}q^{n-1}\sqrt{q}$ can be written as a linear combination of a few hyperplanes. Consequently, we use this result to provide a graph-theoretical sufficient condition for these codewords of small weight to be minimal.
\end{abstract}

{\it Keywords:} Minimal codewords, Projective spaces, Small weight codewords.
	
{\it Mathematics Subject Classification:} $94$A$62$, $05$B$25$, $94$B$05$.

\section{Introduction}

Let $n\in\NN\setminus\{0,1\}$ and $q:=p^h$, with $p$ a prime and $h$ a positive integer. Denote by $\mathbb{F}_q$ the Galois field of order $q$ and by $\pg(n,q)$ the Desarguesian projective space of dimension $n$ over $\mathbb{F}_q$.
Define
\[
    \theta_{m,q}:=\begin{cases}\frac{q^{m+1}-1}{q-1}\quad&\textnormal{if }m\in\NN\textnormal{,}\\0\quad&\textnormal{if }m\in\ZZ\setminus\NN\textnormal{,}\end{cases}
\]
which equals the number of points in $\pg(m,q)$.
Interesting classes of linear error correcting codes can be constructed in this geometric setting; see for instance \cite{Rudolph}.

Let $j,k\in\NN$ be such that $0\leq j<k<n$, and denote by $G_j(n,q)$, respectively $G_k(n,q)$, the set of all $j$-dimensional, respectively $k$-dimensional, subspaces of $\pg(n,q)$.
For each $\kappa\in G_k(n,q)$, we can define a function $f_\kappa$ as follows.
\[
    f_\kappa:G_j(n,q)\rightarrow\mathbb{F}_p:\lambda\mapsto f_\kappa(\lambda):=\begin{cases}1&\textnormal{if }\lambda\subseteq\kappa\textnormal{,}\\0&\textnormal{otherwise.}\end{cases}
\]
Define the $p$-ary linear code $\mc_{j,k}(n,q)$ as the $p$-ary vector subspace of $\mathbb{F}_p^{G_j(n,q)}$ generated by the set $\{f_\kappa:\kappa\in G_k(n,q)\}$.
We will often denote $\mc_{0,k}(n,q)$ by $\mc_k(n,q)$.
These codes belong to the more general class of generalised Reed–Muller codes; see \cite{assmus_key_1992,bagchi,FackFancsaliStorme,MR0465510,MR2766085}.

An element $f_\kappa\in\{f_\kappa:\kappa\in G_k(n,q)\}$ will often be identified by the corresponding $k$-subspace $\kappa$.
As a consequence, for a codeword $c:=\alpha_1f_{\kappa_1}+\alpha_2f_{\kappa_2}+\dots+\alpha_sf_{\kappa_s}$, $\alpha_i\in\mathbb{F}_p$, we will informally describe $c$ as being `a linear combination of the subspaces $\kappa_1,\dots,\kappa_s$'.
\textbf{By convention, if $\boldsymbol{c}$ can be written as a linear combination of the subspaces $\boldsymbol{\kappa_1,\dots,\kappa_s}$, we assume that each of these subspaces appears non-trivially, i.e.\ the corresponding coefficients $\boldsymbol{\alpha_1,\dots,\alpha_s}$ are non-zero.}

For any $c\in\mc_{j,k}(n,q)$, define the \emph{support} of $c$ as $\supp(c):=\{\lambda\in G_j(n,q):c(\lambda)\neq0\}$ and the \emph{weight} of $c$ as $\wt(c):=|\supp(c)|$.
The \emph{minimum weight} of $\mc_{j,k}(n,q)$ is defined as $d\left(\mc_{j,k}(n,q)\right):=\min\{\wt(c):\zero\neq c\in\mc_{j,k}(n,q)\}$. The minimum weight of the codes $\mc_{j,k}(n,q)$ is well known.

\begin{thm}[\!\!{\cite[Theorem 1]{bagchi}}]\label{Th:1.1}
    The minimum weight of $\mc_{j,k}(n,q)$ equals the number of $j$-spaces in a fixed $k$-space. The minimum weight codewords are the scalar multiples of $k$-spaces.
\end{thm}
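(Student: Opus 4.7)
The plan is to obtain the upper bound from the generators directly and the matching lower bound, together with the characterization of equality, by induction on $j$. For the upper bound, any $\kappa \in G_k(n,q)$ is itself a projective $k$-space and therefore contains exactly $\binom{k+1}{j+1}_q$ subspaces of projective dimension $j$ (the Gaussian binomial coefficient). Hence $\wt(f_\kappa) = \binom{k+1}{j+1}_q$, which immediately yields $d\bigl(\mc_{j,k}(n,q)\bigr) \leq \binom{k+1}{j+1}_q$, matching the claimed minimum weight.

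For the lower bound I would induct on $j$. The base case $j=0$ concerns $\mc_k(n,q)$, whose minimum weight is classically $\binom{k+1}{1}_q = (q^{k+1}-1)/(q-1)$ and is attained exclusively on scalar multiples of some $f_\kappa$. This point--$k$-space statement can be established via blocking-set arguments: the Bose--Burton theorem handles the subcase $k=n-1$, and general $k$ follows by an appropriate projection argument (or a separate induction on $n-k$).

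For the inductive step $j-1 \to j$, take a nonzero $c \in \mc_{j,k}(n,q)$ and a point $P$ lying on some $\lambda \in \supp(c)$. Via the quotient $\pg(n,q)/P \cong \pg(n-1,q)$, the $j$-spaces through $P$ correspond to $(j-1)$-subspaces of the quotient and the generating $k$-spaces through $P$ to $(k-1)$-subspaces; restricting $c$ accordingly produces a nonzero codeword $c_P \in \mc_{j-1,k-1}(n-1,q)$. By induction, $\wt(c_P) \geq \binom{k}{j}_q$, so at least $\binom{k}{j}_q$ elements of $\supp(c)$ pass through $P$. Combining this with a double-count of the incident pairs $\{(P,\lambda) : P \in \lambda \in \supp(c)\}$ against the lower bound $|S| \geq (q^{k+1}-1)/(q-1)$ on the size of the point set $S$ covered by $\supp(c)$ (itself reducible to the $j=0$ case via a derived point-level codeword) will yield $\wt(c) \geq \binom{k+1}{j+1}_q$.

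The main obstacle is the equality case. When equality holds throughout, each $c_P$ must itself be a scalar multiple of a generator $f_{\kappa'_P}$ for some $(k-1)$-subspace $\kappa'_P$ through $P$, and the hard part is gluing these local $(k-1)$-spaces as $P$ varies into a single $k$-space $\kappa \supseteq \lambda$ such that $\supp(c)$ is exactly the set of $j$-subspaces of $\kappa$. I expect this rigidity to follow by tracing the equality conditions through the inductive chain and exploiting how the different quotients $\pg(n,q)/P$ glue inside the common ambient $\pg(n,q)$; in characteristic $p$ one must also be mindful of divisibility conditions on the involved Gaussian binomials that could cause the counting step to degenerate.
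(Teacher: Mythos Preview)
The paper does not prove this theorem: it is quoted from \cite{bagchi} as a known result and no argument is supplied. So there is no ``paper's approach'' to compare your proposal against; your outline stands or falls on its own.

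As an outline it is plausible, but there is a genuine gap in the double-counting step. You need $|S|\geq\theta_k$ for the point shadow $S$ of $\supp(c)$, and you propose to obtain this from a ``derived point-level codeword''. The natural candidate is $\hat c(P):=\sum_{\lambda\ni P}c(\lambda)$; on a generator $f_\kappa$ this produces $\binom{k}{j}_q\cdot f_\kappa^{(0)}$ modulo $p$, so $\hat c$ does land in $\mc_k(n,q)$, but it is the \emph{zero} codeword whenever $p\mid\binom{k}{j}_q$. In that regime you get no information about $|S|$ and the inequality $\wt(c)\,\theta_j\geq|S|\,\binom{k}{j}_q$ no longer forces $\wt(c)\geq\binom{k+1}{j+1}_q$. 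You flag the divisibility issue at the end, but it is not a technicality to be tidied up later: it breaks the counting argument as written, and some genuinely different idea (for instance working with suitable subspaces rather than points, or invoking the polynomial/Hamada machinery behind Bagchi's proof) is needed to cover those parameters.

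Two smaller points: your base case $j=0$ for general $k$ is itself a nontrivial theorem and ``projection argument'' is doing a lot of work there; and the gluing of the local $(k-1)$-spaces $\kappa'_P$ into a single $\kappa$ in the equality analysis, which you correctly identify as the hard part, really does require a separate structural argument and is where most published proofs spend their effort.
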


Recently, small weight codewords of $\mc_{j,k}(n,q)$ were studied and characterised in \cite{AdriaensenDenaux}, in which the authors also investigate the minimal weight problem of the dual code $\mc_{j,k}(n,q)^\perp$.

For the particular code $\mc_{1}(2,q)$, small weight codewords have been characterised, see \cite{Chouinard,FackFancsaliStorme,LAVRAUW2009996,szonyi}; we summarise the most recent results in the following two theorems.

\begin{thm}[\!\!{\cite[Theorem 4.8, Corollary 4.10]{szonyi}}]
    Let $c$ be a codeword of $\mc_{1}(2,p)$, $p >17$ a prime. If $\wt(c) \leq \max\{3p +1,4p-22\}$, then $c$ is either the linear combination of three lines or \cite[Example 4.7]{szonyi}.
\end{thm}

\begin{thm}[\!\!{\cite[Theorem 4.3]{szonyi}}]\label{Res_SzonyiZsuzsa}
    Let $q=p^h$, $h\geq2$, with $q>27$.
    Then any $c\in\mc_1(2,q)$ with
	\begin{itemize}
		\item $\wt(c)<(\lfloor\sqrt{q}\rfloor+1)(q+1-\lfloor\sqrt{q}\rfloor)$, when $h>2$, or
		\item $\wt(c)<\frac{(p-1)(p-4)(p^2+1)}{2p-1}$, when $h=2$,
	\end{itemize}
	is a linear combination of exactly $\left\lceil\frac{\wt(c)}{q+1}\right\rceil$ different lines.
\end{thm}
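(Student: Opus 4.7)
The plan is to reduce the classification to a stability result for weighted point sets in $\pg(2,q)$, then iteratively peel lines off of $c$ until the zero codeword is reached.

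First I would regard $c$ as a function $c\colon\pg(2,q)\to\mathbb{F}_p$ and attach to it the multiset $M$ in which each point $P$ is taken with multiplicity $c(P)\in\{0,1,\dots,p-1\}$. Choose an affine chart whose line at infinity $\ell_\infty$ can be assumed to carry no charge, and for each direction $(m)\in\ell_\infty$ build the R\'edei-type polynomial
\[
    R_m(X)\;=\;\prod_{P=(a,b)\in\supp(c)}\bigl(X-(b-ma)\bigr)^{c(P)}\in\mathbb{F}_q[X].
\]
Writing $c=\sum_\ell\beta_\ell f_\ell$ forces $\sum_{P\in\ell}c(P)\equiv 0\pmod p$ for every line $\ell$ that is not charged in the decomposition, and this translates into $R_m$ being very close to a $p$-th power: at each $m$, the number of roots of $R_m$ that are not $p$-power roots is controlled by the number of charged lines through $(m)$, itself bounded by $\lceil\wt(c)/(q+1)\rceil$.

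The crucial input is the Sz\H{o}nyi--Weiner stability lemma for lacunary polynomials. Under the stated bound on $\wt(c)$, the lemma forces, for many directions $m$, the polynomial $R_m$ to split genuinely as a $p$-th power times a small remainder, and globally it forces $M$ to coincide, up to a deficit that the weight hypothesis excludes, with a union of $\lceil\wt(c)/(q+1)\rceil$ lines. From this I would extract at least one line $\ell_0$ on which $c$ is a nonzero constant $\alpha_0$ at all $q+1$ points. Subtracting $\alpha_0 f_{\ell_0}$ yields a codeword $c'$ of weight $\wt(c)-(q+1)$ that still satisfies the hypothesis of the theorem, and an induction on $\wt(c)$ terminates after exactly $\lceil\wt(c)/(q+1)\rceil$ steps, producing the claimed decomposition.

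The main obstacle is proving the stability step with the sharpness required in the two regimes. For $q=p^h$ with $h>2$ one leans on the classical threshold $(\lfloor\sqrt q\rfloor+1)(q+1-\lfloor\sqrt q\rfloor)$ coming from the characterisation of small minimal blocking sets, and the bookkeeping is comparatively clean. For $h=2$, however, Baer-subplane obstructions must be excluded explicitly, which is where the tighter bound $\frac{(p-1)(p-4)(p^2+1)}{2p-1}$ enters: the argument has to rule out the possibility that the weight of $c$ concentrates on a Baer subplane rather than on lines, by a finer count of the "bad" directions for the associated lacunary polynomial. Carrying this refined count through the induction, in particular verifying that no Baer-type obstruction resurfaces after each peeling step, is the technically delicate part of the proof.
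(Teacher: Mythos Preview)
This theorem is not proved in the present paper at all: it is quoted verbatim from Sz\H{o}nyi and Weiner \cite[Theorem~4.3]{szonyi} and used as a black box, serving (via Corollary~\ref{Crl_SzonyiZsuzsa} and Lemma~\ref{Lm_PlaneCase}) as the base case for the induction that proves Theorem~\ref{Thm_MainHyperplane}. There is therefore no proof here against which to compare your proposal.

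For what it is worth, your sketch is in the spirit of the original Sz\H{o}nyi--Weiner argument, which indeed passes through R\'edei-type polynomials and a stability result for weighted point sets. But as written it is too schematic at the decisive step. The assertion that one can choose a line at infinity ``carrying no charge'' is not available a priori. More seriously, the inductive ``peeling'' you describe requires that at each stage some line $\ell_0$ carries a single nonzero constant value at \emph{all} of its $q+1$ points; your outline does not explain why the stability lemma delivers this rather than merely that $\supp(c)$ is covered by few lines, and it is exactly this gap that the actual proof has to close. The genuine argument establishes the line decomposition globally from the stability theorem rather than by repeated subtraction.
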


Recently, results about $\mc_{1}(2,q)$ were extended to $\mc_{n-1}(n,q)$.

\begin{thm}
    \begin{enumerate}
        \item \cite[Theorem 1.4]{POLVERINO20181} There are no codewords with weight in the open interval $]\theta_{n-1,q},2q^{n-1}[$, and the codewords of weight $2q^{n-1}$ in $\mc_{n-1}(n,q)$ are the scalar multiples of the differences of two distinct hyperplanes of $\pg(n,q)$.
        \item \cite[Theorem 3.1.6]{AdriaensenDenauxStorme} If $q$ is large enough, the codewords in $\mc_{n-1}(n,q)$ of weight at most $4q^{n-1}-\mathcal{O}\left(q^{n-2}\sqrt{q}\right)$ can be written as linear combinations of hyperplanes through a common $(n-3)$-space.
    \end{enumerate}
\end{thm}

The bound on the weight of codewords in the latter result seemed hard to improve if $q$ is prime, due to the existence of a peculiar small weight codeword in $\mc_1(2,q)$, $q$ prime \cite[Example 4.7]{szonyi}.
This codeword, however, ceases to exist when $q$ is assumed to be non-prime.
Hence, in this paper, we will focus on the codes $\mc_{n-1}(n,q)$ for $q$ not prime; our main result is the following extension of Theorem \ref{Res_SzonyiZsuzsa} to the case $n\geq3$ (see Section \ref{Sect_MinWeight}).

\begin{thm}\label{Thm_MainHyperplane}
    Let $n\geq3$ and let $q=p^h$, $h\geq2$, with
    \begin{equation}\label{eq:q}
        q\geq\begin{cases}\max\left\{32,2^{2n-4}\right\}\quad&\textnormal{if }h>2\textnormal{,}\\2^{2n}\quad&\textnormal{if }h=2\textnormal{.}\end{cases}
    \end{equation}
    Then any $c\in\mc_{n-1}(n,q)$ with
    \begin{itemize}
        \item $\wt(c)\leq \left(\left\lfloor\frac{1}{2^{n-2}}\sqrt{q}\right\rfloor-1\right)\theta_{n-1,q}$, when $h>2$, or
        \item $\wt(c)\leq \left(\left\lfloor\frac{p}{2^n}\right\rfloor-1\right)\theta_{n-1,q}$, when $h=2$,
    \end{itemize}
    is a linear combination of exactly $\left\lceil\frac{\wt(c)}{\theta_{n-1,q}}\right\rceil$ different hyperplanes.
\end{thm}

Note that if $q\in\{16,27\}$ and $n=3$, then Theorem \ref{Thm_MainHyperplane} follows from the $(j,k)=(0,2)$ case of Theorem \ref{Th:1.1}; all other values of $q$ and $n\geq3$ not satisfying \eqref{eq:q} provide \emph{non-positive} upper bounds on the weights $\wt(c)$ and hence make Theorem \ref{Thm_MainHyperplane} trivially true.
This means that the explicit assumptions on $q$ isn't necessary for the theorem to stay true; we however keep the assumptions \eqref{eq:q} to emphasize that we may assume $q$ to be big.

\bigskip
In Section \ref{Sec:minimal}, we manage to formulate a graph-theoretical sufficient condition for these codewords of small weight to be \emph{minimal} (see Definition \ref{Def_Minimal}). Minimal codewords can be used to describe access structures in linear code-based secret sharing schemes (SSS) (see \cite{Massey1993,Massey1995}), which is a method to distribute shares of a secret to each of the participants $\mathcal{P}$ in such a way that only the authorised subsets of $\mathcal{P}$ (access structure $\Gamma$) could reconstruct the secret; see \cite{Shamir,Blakley}. 

In \cite{Massey1993,Massey1995}, Massey proposed the use of linear codes $\mathcal{C}$ for realising a perfect  and ideal SSS in which the access structure of the secret-sharing is specified by the supports of minimal codewords in $\mathcal{C}^{\bot}$ having $1$ as the first component.

Due to the hardness of determining the set of minimal codewords of a linear code \cite{BMeT1978,BN1990}, research is mainly focused on analysing codes for which every codeword is minimal; see for instance \cite{CCP2014,YuanDing,HengDingZhou,DingHengZhou,CohenMesnagerPatey,BoniniBorello,BartoliBonini,AshikhminBarg}.

\section{Small weight codewords of $\boldsymbol{\mc_{n-1}(n,q)}$, $\boldsymbol{q}$ not prime}\label{Sect_MinWeight}

This section is devoted to prove our main result (Theorem \ref{Thm_MainHyperplane}).
Therefore, assumptions \eqref{eq:q} on $q$  will be used throughout this part of the paper.

As the proof will be done by induction, the following lemma is a relatively trivial but crucial result, and will often be used (without mention) throughout the proofs presented in this section. For a codeword $c\in\mc_{n-1}(n,q)$ and an $i$-subspace $\iota$ of $\pg(n,q)$, we define the \emph{restricted codeword} $\restr{c}{\iota}$ as the codeword $c$ restricted to the points of $\iota$.

\begin{lm}[\!\!{\cite[Remark 3.1]{POLVERINO20181}}]\label{Lm_SubspaceIsCodeWord}
	Let $c\in \mc_{n-1}(n,q)$ be a codeword and $\iota$ an $i$-space of $\pg(n,q)$.
	Then $\restr{c}{\iota}$ is a codeword of $\mc_{i-1}(i,q)$.
\end{lm}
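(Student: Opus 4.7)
The plan is to reduce the claim to the definition of $\mc_{n-1}(n,q)$ as the $\mathbb{F}_p$-span of hyperplane indicator functions. Write $c=\sum_{j=1}^{s}\alpha_j f_{\pi_j}$ with $\alpha_j\in\mathbb{F}_p$ and $\pi_j$ a hyperplane of $\pg(n,q)$; since restriction to the points of $\iota$ is $\mathbb{F}_p$-linear, it suffices to show that $\restr{f_{\pi_j}}{\iota}\in\mc_{i-1}(i,q)$ for every $j$.

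Next I would split into cases according to how $\iota$ meets $\pi_j$. By the Grassmann formula, $\dim(\iota\cap\pi_j)\in\{i-1,i\}$. If $\iota\not\subseteq\pi_j$, then $\iota\cap\pi_j$ is a hyperplane of $\iota$, so $\restr{f_{\pi_j}}{\iota}$ is precisely the indicator function $f_{\iota\cap\pi_j}$ of an $(i-1)$-space of $\iota$, which lies in $\mc_{i-1}(i,q)$ by definition. If on the other hand $\iota\subseteq\pi_j$, then $\restr{f_{\pi_j}}{\iota}$ is the all-ones vector $\mathbf{1}_\iota$ on the points of $\iota$, so I need to exhibit $\mathbf{1}_\iota$ as an element of $\mc_{i-1}(i,q)$.

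For this last step I would sum the indicator functions of all hyperplanes of $\iota$: each point of $\iota$ lies in exactly $\theta_{i-1}$ hyperplanes of $\iota$, so this sum equals $\theta_{i-1}\cdot\mathbf{1}_\iota$. Since $q=p^h$ with $h\geq 1$, we have $q\equiv 0\pmod p$ and hence $\theta_{i-1}=1+q+\cdots+q^{i-1}\equiv 1\pmod p$, which shows $\mathbf{1}_\iota\in\mc_{i-1}(i,q)$. Combining the two cases gives $\restr{c}{\iota}\in\mc_{i-1}(i,q)$.

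The argument is essentially bookkeeping; the only non-vacuous point is verifying that $\mathbf{1}_\iota$ belongs to $\mc_{i-1}(i,q)$, which I do not expect to be a real obstacle thanks to the mod-$p$ computation of $\theta_{i-1}$.
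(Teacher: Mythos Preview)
Your argument is correct: writing $c$ as an $\mathbb{F}_p$-combination of hyperplane indicators, restricting term by term, and handling the case $\iota\subseteq\pi_j$ via the identity $\sum_{H}f_H=\theta_{i-1}\cdot\mathbf{1}_\iota\equiv\mathbf{1}_\iota\pmod p$ (where the sum runs over all hyperplanes of $\iota$) is exactly the standard proof. The paper itself does not supply a proof of this lemma; it merely cites \cite[Remark~3.1]{POLVERINO20181}, so there is no in-paper argument to compare against, but your write-up matches the usual justification one finds in the literature.
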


To simplify notation, define the following values for $i\in\{0,1,2,\dots,n\}$ and $q=p^h$ a prime power, $h\geq2$.
\begin{itemize}
    \item $\Delta_{i,q}:=\begin{cases}\left\lfloor\frac{1}{2^{i-2}}\sqrt{q}\right\rfloor\quad&\textnormal{if }h>2\textnormal{,}\\\left\lfloor\frac{p}{2^i}\right\rfloor\quad&\textnormal{if }h=2\textnormal{.}\end{cases}\qquad$(note that $\Delta_{i,q}\geq1$)
    \item $W(i,q):=\left(\Delta_{i,q}-1\right)\theta_{i-1,q}\geq0$.
    \item $U(n,i,q):=q^i-(\Delta_{n,q}-2)\lfloor q^{i-1}\rfloor-(i-2)\big((q-1)\Delta_{n,q}+1\big)\lfloor q^{i-3}\rfloor+\theta_{i-3,q}$.
\end{itemize}

\begin{df}
    Let $c\in\mc_{n-1}(n,q)$ and let $\iota$ be an $i$-subspace of $\pg(n,q)$ ($i\in\{0,1,2,\dots,n\}$).
    \begin{itemize}
        \item If $\wt(\restr{c}{\iota})\leq W(i,q)$, we will call $\iota$ a \emph{thin (sub)space} (with respect to $c$).
        \item If $\wt(\restr{c}{\iota})\geq U(n,i,q)$, we will call $\iota$ a \emph{thick (sub)space} (with respect to $c$).
    \end{itemize}
    If $i=0$ and $\iota$ is thin w.r.t.\ $c$ (i.e.\ $\iota\notin\supp(c)$), we will call $\iota$ a \emph{hole} of $c$.
\end{df}

Keeping these assumptions and definition in mind, together with the ones depicted in Theorem \ref{Thm_MainHyperplane}, the latter theorem reads as follows.

\begin{thm*}
    Any $c\in\mc_{n-1}(n,q)$ with $\wt(c)\leq W(n,q)$ is a linear combination of exactly $\left\lceil\frac{\wt(c)}{\theta_{n-1,q}}\right\rceil$ different hyperplanes.
\end{thm*}

During the proof of the main theorem, we will make use of induction on the dimension $n$.
The base case, stated below in Lemma \ref{Lm_PlaneCase}, was already proved in \cite{szonyi}.

\begin{lm}\label{Lm_PlaneCase}
    Any $c\in\mc_1(2,q)$ with $\wt(c)\leq W(2,q)$ is a linear combination of exactly $\left\lceil\frac{\wt(c)}{q+1}\right\rceil$ different lines.
\end{lm}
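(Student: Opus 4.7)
The plan is to recognise Lemma~\ref{Lm_PlaneCase} as essentially a direct restatement of the Sz\H{o}nyi--Weiner result (Theorem~\ref{Res_SzonyiZsuzsa}). Hence the only work is a size comparison: I need to show $W(2,q)$ sits strictly below the threshold appearing in Theorem~\ref{Res_SzonyiZsuzsa} in both the $h>2$ and $h=2$ regimes, whereupon the conclusion on $\lceil\wt(c)/(q+1)\rceil$ lines is inherited verbatim.

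First I would check that the standing assumptions imply the ``$27<q$'' hypothesis of Theorem~\ref{Res_SzonyiZsuzsa}: when $h>2$ we have $q\geq 32$, and when $h=2$ we have $q\geq 2^{2n}\geq 64$, so the hypothesis is met in either case. Next, for $h>2$, I would form the difference
\[
(\lfloor\sqrt{q}\rfloor+1)(q+1-\lfloor\sqrt{q}\rfloor)-(\lfloor\sqrt{q}\rfloor-1)(q+1)=2(q+1)-\lfloor\sqrt{q}\rfloor(\lfloor\sqrt{q}\rfloor+1),
\]
and bound it from below by $2(q+1)-\sqrt{q}(\sqrt{q}+1)=q-\sqrt{q}+2>0$, confirming that $W(2,q)=(\lfloor\sqrt{q}\rfloor-1)(q+1)$ is strictly below the Sz\H{o}nyi--Weiner threshold.

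For $h=2$, the comparison (after dividing both sides by $q+1=p^{2}+1$) reduces to showing $\lfloor p/4\rfloor-1<(p-1)(p-4)/(2p-1)$. Since $p$ is an odd prime, $\lfloor p/4\rfloor\leq (p-1)/4$, so it suffices to verify $(p-5)(2p-1)<4(p-1)(p-4)$, which rearranges to $2p^{2}-9p+11>0$; this quadratic has negative discriminant and is therefore positive for every $p$. With both inequalities established, Theorem~\ref{Res_SzonyiZsuzsa} applies and yields the claim. I do not anticipate any real obstacle: the argument involves no new geometric ingredient beyond the planar Sz\H{o}nyi--Weiner theorem, the only step requiring care being the (routine) bookkeeping showing that $W(2,q)$ does not overshoot the Sz\H{o}nyi--Weiner thresholds.
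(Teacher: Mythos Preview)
Your proposal is correct and follows essentially the same route as the paper: both arguments reduce the lemma to Theorem~\ref{Res_SzonyiZsuzsa} by verifying the elementary inequality $2(q+1)>\lfloor\sqrt{q}\rfloor(\lfloor\sqrt{q}\rfloor+1)$ in the $h>2$ case and the inequality $\lfloor p/4\rfloor-1<(p-1)(p-4)/(2p-1)$ in the $h=2$ case. The only cosmetic difference is that the paper bounds $\lfloor p/4\rfloor$ above by $p/4$ rather than by $(p-1)/4$, and does not spell out the $q>27$ check, which is implicit in the standing assumptions.
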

\begin{proof}
    Note that $2(q+1)=q+q+2\geq\lfloor\sqrt{q}\rfloor^2+\lfloor\sqrt{q}\rfloor+2>\lfloor\sqrt{q}\rfloor(\lfloor\sqrt{q}\rfloor+1)$.
    Hence, we obtain
    \begin{align*}
        \wt(c)&\leq(\lfloor\sqrt{q}\rfloor-1)(q+1)\\
        &=(\lfloor\sqrt{q}\rfloor+1)(q+1)-2(q+1)\\
        &<(\lfloor\sqrt{q}\rfloor+1)(q+1)-\lfloor\sqrt{q}\rfloor(\lfloor\sqrt{q}\rfloor+1)\\
        &=(\lfloor\sqrt{q}\rfloor+1)(q+1-\lfloor\sqrt{q}\rfloor)
    \end{align*}
    in case $h>2$.
    %%%%%%%%%%%%%%%%%%%%%%%%%%%%%%%%%%%%%%%%%%%%%%%%%%
    %\textcolor{red}{Note to self ($1/3$): it's due to this that we need $W(2,q)\cong(\lfloor\sqrt{q}\rfloor-1)q$, hence $W(n,q)=\mathcal{O}\big((\lfloor\sqrt{q}\rfloor-1)q^{n-1}\big)$.}
    %%%%%%%%%%%%%%%%%%%%%%%%%%%%%%%%%%%%%%%%%%%%%%%%%%
    Note that $\frac{p}{4}-1<\frac{(p-1)(p-4)}{2p-1}$ if $p\geq5$, hence we get
    \begin{align*}
        \wt(c)&\leq\left(\frac{p}{4}-1\right)(p^2+1)\\
        &<\frac{(p-1)(p-4)(p^2+1)}{2p-1}
    \end{align*}
    in case $h=2$.
    The claim follows from Theorem \ref{Res_SzonyiZsuzsa}.
\end{proof}

\begin{prop}\label{Prop_ThickSpaceLowerBound}
    For any $i\in\{1,2,\dots,n\}$,
    \[
        \theta_{i,q}-\Delta_{n,q}q^{i-1}+1\leq U(n,i,q)\textnormal{,}
    \]
    with equality if $i=1$.
\end{prop}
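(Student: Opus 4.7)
The plan is to treat the inequality as a direct computation, keeping in mind that $\theta_m = \sum_{j=0}^{m} q^j$ for $m \geq 0$, that $\theta_m = 0$ for $m < 0$, and that $\lfloor q^{i-3}\rfloor = q^{i-3}$ when $i\geq 3$ while $\lfloor q^{i-3}\rfloor = 0$ when $i\in\{1,2\}$ (since $q\geq 2$). This naturally splits the argument into the cases $i=1$, $i=2$, and $i\geq 3$.

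For $i=1$, substituting directly into the definition yields $U(n,1,q) = q - (\Delta_{n,q}-2) = q - \Delta_{n,q} + 2$, which matches $\theta_1 - \Delta_{n,q} + 1 = q - \Delta_{n,q} + 2$, giving the claimed equality. For $i=2$, the same vanishing of the last two summands of $U$ gives $U(n,2,q) = q^2 + 2q - \Delta_{n,q}q$ against $\theta_2 - \Delta_{n,q}q + 1 = q^2 + q + 2 - \Delta_{n,q}q$, so the inequality reduces immediately to $2 \leq q$.

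For the main case $i\geq 3$, I would expand $\theta_i - \theta_{i-3} = q^{i-2} + q^{i-1} + q^i$ and collect terms in
\[
U(n,i,q) - \theta_i + \Delta_{n,q}q^{i-1} - 1,
\]
which I expect to simplify to
\[
q^{i-3}\bigl(q^2 - q - (i-2)((q-1)\Delta_{n,q} + 1)\bigr) - 1.
\]
Non-negativity is then equivalent to $(q-1)(q - (i-2)\Delta_{n,q}) \geq (i-2) + q^{3-i}$.

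The only mildly delicate step is verifying this last inequality against the standing hypotheses. Using $\Delta_{n,q}\leq\sqrt{q}/2^{n-2}$ when $h>2$ (and $\Delta_{n,q}\leq\sqrt{q}/2^n$ when $h=2$), together with $(n-2)/2^{n-2}\leq 1$ for $n\geq 3$, I get $(i-2)\Delta_{n,q}\leq\sqrt{q}$ and hence $q - (i-2)\Delta_{n,q}\geq q - \sqrt{q}\geq q/2$ under the size hypothesis on $q$; the resulting lower bound $(q-1)q/2$ comfortably dominates $(i-2)+q^{3-i}\leq n-1$. No genuine obstacle is expected — the proof is essentially algebraic bookkeeping, the only real subtlety being the careful interpretation of the edge cases $i\in\{1,2\}$ where the floor and $\theta_{i-3}$ terms must be read as zero.
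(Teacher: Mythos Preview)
Your proof is correct and follows essentially the same route as the paper: both arguments reduce the inequality to showing that the term $(i-2)\big((q-1)\Delta_{n,q}+1\big)\lfloor q^{i-3}\rfloor$ is bounded by $q^{i-1}-q^{i-2}-1$, and both use the key estimate $(i-2)\Delta_{n,q}\leq(n-2)\Delta_{n,q}\leq\tfrac{n-2}{2^{n-2}}\sqrt{q}\leq\sqrt{q}$. The only difference is organisational: the paper treats all $i\geq2$ at once (the floor $\lfloor q^{i-3}\rfloor$ vanishes automatically when $i=2$), whereas you split off $i=2$ explicitly and rearrange the $i\geq3$ case into the equivalent form $(q-1)\big(q-(i-2)\Delta_{n,q}\big)\geq(i-2)+q^{3-i}$.
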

\begin{proof}
    The equality for $i=1$ can be easily checked, hence we may assume that $i\geq2$.
    Moreover, as $\Delta_{n,q}\geq1$, we get
    \[
        (i-2)\big((q-1)\Delta_{n,q}+1\big)\lfloor q^{i-3}\rfloor=(i-2)\big(q\Delta_{n,q}-\Delta_{n,q}+1\big)\lfloor q^{i-3}\rfloor\leq(i-2)\Delta_{n,q}q^{i-2}\textnormal{.}
    \]
    
    As $i\leq n$ and $n\geq3$, we obtain
    \[
        (i-2)\Delta_{n,q}q^{i-2}\leq\frac{n-2}{2^{n-2}}q^{i-2}\sqrt{q}\leq\frac{1}{2}q^{i-2}\sqrt{q}\textnormal{.}
    \]
    Moreover, since $i\geq2$ and $q\geq3$, the inequality $1\leq q^{i-2}\left(q-\frac{1}{2}\sqrt{q}-1\right)$ holds, which yields $\frac{1}{2}q^{i-2}\sqrt{q}\leq q^{i-1}-q^{i-2}-1$.
    In conclusion, we have deduced that
    \[
        (i-2)\big((q-1)\Delta_{n,q}+1\big)\lfloor q^{i-3}\rfloor\leq q^{i-1}-q^{i-2}-1\textnormal{,}
    \]
    which suffices to prove the statement.
\end{proof}

For any point set $\mathcal{P}$ of $\pg(n,q)$ and $m\in\{0,1,\dots,|\mathcal{P}|\}$, an $m$\emph{-secant to} $\mathcal{P}$ is defined to be a line meeting $\mathcal{P}$ in precisely $m$ points.

\begin{lm}\label{Lm_SmallLargeSecants}
    Let $c\in\mc_{n-1}(n,q)$ with $\wt(c)\leq W(n,q)$.
    Then there are no $m$-secants to $\supp(c)$ if
    \[
        \Delta_{n,q}+1\leq m\leq q-\Delta_{n,q}+1\textnormal{.}
    \]
    In particular, every line is either thin or thick with respect to $c$.
\end{lm}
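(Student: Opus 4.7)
I would argue by contradiction, reducing to the planar case (Lemma~\ref{Lm_PlaneCase}) by restricting $c$ to a carefully chosen plane through a putative bad secant. Suppose $\ell$ is a line with $|\ell\cap\supp(c)|=m$ for some $m\in[\Delta_{n,q}+1,\,q-\Delta_{n,q}+1]$, and consider the $\theta_{n-2}$ planes containing $\ell$. A support point on $\ell$ lies in all of them, while a support point off $\ell$ lies in exactly one, so a straightforward double count gives
\[
    \sum_{\pi\supset\ell}\wt(\restr{c}{\pi}) \;=\; m\,\theta_{n-2} + (\wt(c)-m)\textnormal{.}
\]
Averaging then supplies a plane $\pi\supset\ell$ with $\wt(\restr{c}{\pi})\leq m + (\wt(c)-m)/\theta_{n-2}$.

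The next step is to bound this average. Substituting $m\leq q-\Delta_{n,q}+1$, $\wt(c)\leq W(n,q)=(\Delta_{n,q}-1)\theta_{n-1}$, and using the identity $\theta_{n-1}/\theta_{n-2}=q+1/\theta_{n-2}$, a short calculation produces $\wt(\restr{c}{\pi})\leq\Delta_{n,q}(q-1)+2$. Under the standing hypotheses ($n\geq3$ together with the size condition on $q$) one has $\Delta_{n,q}\leq\Delta_{2,q}-1$, which is enough to guarantee both that $\wt(\restr{c}{\pi})\leq W(2,q)$ and that $k:=\lceil\wt(\restr{c}{\pi})/(q+1)\rceil\leq\Delta_{n,q}$. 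Hence Lemma~\ref{Lm_PlaneCase} applies and yields an expression $\restr{c}{\pi}=\sum_{i=1}^{k}\alpha_i f_{\ell_i}$ with $k\leq\Delta_{n,q}$ distinct lines $\ell_i\subset\pi$ and nonzero coefficients $\alpha_i$.

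I would then finish with a case analysis of $\ell$ relative to $\{\ell_1,\dots,\ell_k\}$. If $\ell\neq\ell_i$ for every $i$, the nonzero values of $\restr{c}{\pi}$ on $\ell$ can only occur at the at most $k$ points of $\ell\cap\bigl(\bigcup_i\ell_i\bigr)$, forcing $m\leq k\leq\Delta_{n,q}$, which contradicts $m\geq\Delta_{n,q}+1$. If instead $\ell=\ell_j$ for some $j$, then $\restr{c}{\pi}$ attains the nonzero value $\alpha_j$ at every point of $\ell$ outside the at most $k-1$ intersections with the other $\ell_i$, so $m\geq q+2-k\geq q+2-\Delta_{n,q}$, contradicting $m\leq q-\Delta_{n,q}+1$. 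The ``in particular'' clause then falls out: Proposition~\ref{Prop_ThickSpaceLowerBound} gives $U(n,1,q)=q+2-\Delta_{n,q}$, and the hypotheses easily yield $\Delta_{n,q}\leq W(1,q)$, so every line with $m\leq\Delta_{n,q}$ is thin and every line with $m\geq q+2-\Delta_{n,q}$ is thick.

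The only nontrivial point is really the numeric step $\Delta_{n,q}\leq\Delta_{2,q}-1$ that makes the averaging estimate fit through Lemma~\ref{Lm_PlaneCase}; it is exactly here that the assumption $n\geq3$ is used in an essential way, through the extra factor $2^{n-2}\geq2$ in the denominator defining $\Delta_{n,q}$.
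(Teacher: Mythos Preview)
Your argument is correct, and it takes a genuinely different route from the paper's. The paper argues contrapositively: assuming a bad $m$-secant $\ell$, it shows that \emph{every} plane $\pi\supset\ell$ must satisfy $\wt(\restr{c}{\pi})>\Delta_{n,q}(q+1)$ (otherwise Corollary~\ref{Crl_SzonyiZsuzsa} and the same two-case split you use would already force $m\notin[\Delta_{n,q}+1,\,q-\Delta_{n,q}+1]$), and then sums these lower bounds over the $\theta_{n-2}$ planes through $\ell$ to exceed $W(n,q)$. You instead run the double count in the opposite direction, using it to \emph{produce} a single light plane through $\ell$ and then invoking Lemma~\ref{Lm_PlaneCase} to reach the contradiction on $m$ directly. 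Both proofs hinge on the identical line-in-a-planar-decomposition dichotomy; what differs is whether the global weight bound is the input (your averaging) or the output (the paper's summation). Your approach is slightly more streamlined in that the contradiction lands immediately on $m$ without a second counting step, at the cost of the extra numeric check $\Delta_{n,q}\leq\Delta_{2,q}-1$ needed to squeeze the averaged weight under the tighter threshold $W(2,q)$ rather than the looser bound of Corollary~\ref{Crl_SzonyiZsuzsa} that the paper uses.
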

\begin{proof}
    Suppose, to the contrary, that there exists such an $m$-secant $l$ to $\supp(c)$.
    
    First, we prove that any plane through $l$ has to contain at least $\Delta_{n,q}(q+1)+1$ points of $\supp(c)$.
    After all, suppose to the contrary that $\pi$ is a plane through $l$ for which $\wt(\restr{c}{\pi})\leq\Delta_{n,q}(q+1)$.
    As $n\geq3$, we have
    \[
        \wt(\restr{c}{\pi})\leq\Delta_{n,q}(q+1)\leq\frac{1}{2}\sqrt{q}(q+1)\leq q\sqrt{q}-q\textnormal{,}
    \]
    where this last inequality is valid if $q\geq7$.
    By Lemma \ref{Lm_SubspaceIsCodeWord} and Theorem \ref{Res_SzonyiZsuzsa}, $\restr{c}{\pi}$ is a linear combination of exactly $\left\lfloor\frac{\wt(\restr{c}{\pi})}{q+1}\right\rfloor\leq\Delta_{n,q}$ lines of $\pi$.
    If $l$ is one of these lines, then $l$ contains at least $(q+1)-(\Delta_{n,q}-1)$ points of $\supp(c)$, contradicting the assumptions on $m$.
    If $l$ is not one of these lines, then $l$ contains at most $\Delta_{n,q}$ points of $\supp(c)$, yet again a contradiction to the assumptions.
    
    In conclusion, any plane through $l$ has to contain at least $\Delta_{n,q}(q+1)+1$ points of $\supp(c)$.
    As there exist $\theta_{n-2,q}$ planes through $l$, we obtain the following contradiction:
    \begin{align*}
        \wt(c)&\geq\Delta_{n,q}(q+1)+1+(\theta_{n-2,q}-1)\big(\Delta_{n,q}(q+1)+1-m\big)\\
        &\geq\Delta_{n,q}(q+1)+1+(\theta_{n-2,q}-1)(\Delta_{n,q}q-q)\\
        &=\Delta_{n,q}+q+1+q\theta_{n-2,q}(\Delta_{n,q}-1)\\
        &=q+2+\theta_{n-1,q}(\Delta_{n,q}-1)>W(n,q)\textnormal{.}
    \end{align*}
    The second part of the lemma follows directly, as $\Delta_{n,q}\leq W(1,q)$ and $U(n,1,q)=q-\Delta_{n,q}+2$.
\end{proof}

\begin{lm}\label{Lm_ExistsSecant}
    Let $c\in\mc_{n-1}(n,q)$ with $\wt(c)\leq W(n,q)$. Then the value
    \[
        m:=\max\{a\in\NN:\textnormal{there exists a thin }a\textnormal{-secant to }\supp(c)\}
    \]
    is well-defined and belongs to $\{0,1,\dots,\Delta_{n,q}\}$.
    Moreover, $m=0$ if and only if $c=\zero$.
\end{lm}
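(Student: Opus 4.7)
I would split the statement into two independent claims: the upper bound $m \leq \Delta_{n,q}$, which follows almost immediately from the dichotomy established in Lemma \ref{Lm_SmallLargeSecants}, and the existence of a thin $a$-secant with $a \geq 1$ as soon as $c \neq \mathbf{0}$, which I would extract from a pencil-counting argument through a point $P \in \supp(c)$.

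For the upper bound, note first that, by definition, any thin line $\ell$ satisfies $|\ell \cap \supp(c)| = \wt(\restr{c}{\ell}) \leq W(1,q) = \Delta_{1,q} - 1$. By Lemma \ref{Lm_SmallLargeSecants}, no line meets $\supp(c)$ in a number of points lying in the interval $[\Delta_{n,q} + 1,\, q - \Delta_{n,q} + 1]$. Under the standing assumptions one checks easily that $\Delta_{1,q} - 1 < q - \Delta_{n,q} + 2$ (the left-hand side being of order $\sqrt{q}$, the right of order $q$), so every thin line must in fact satisfy $|\ell \cap \supp(c)| \leq \Delta_{n,q}$. This gives $m \leq \Delta_{n,q}$, and in particular every element of the set defining $m$ lies in $\{0, 1, \ldots, \Delta_{n,q}\}$.

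For the existence side, if $c = \mathbf{0}$ every line is a thin $0$-secant and $m = 0$; so assume $c \neq \mathbf{0}$ and fix $P \in \supp(c)$. Suppose for contradiction that every one of the $\theta_{n-1}$ lines through $P$ is thick. By Lemma \ref{Lm_SmallLargeSecants} each such line contains at least $q - \Delta_{n,q} + 2$ points of $\supp(c)$, hence at least $q - \Delta_{n,q} + 1$ points of $\supp(c) \setminus \{P\}$; since every point of $\supp(c) \setminus \{P\}$ lies on exactly one line of the pencil at $P$, double-counting gives
\[
    \wt(c) - 1 \;\geq\; \theta_{n-1}\bigl(q - \Delta_{n,q} + 1\bigr)\textnormal{.}
\]
The closest thing to a genuine obstacle — essentially a bookkeeping step — is to check that this contradicts the hypothesis $\wt(c) \leq W(n,q) = (\Delta_{n,q} - 1)\theta_{n-1}$; it reduces to the elementary inequality $q \geq 2\Delta_{n,q} - 2$, which is immediate from $\Delta_{n,q} \leq \sqrt{q}$ in both cases $h > 2$ and $h = 2$. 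Thus some line $\ell$ through $P$ must be thin; because it contains $P \in \supp(c)$, it is a thin $a$-secant with $a \geq 1$. This yields both well-definedness of $m$ and the equivalence $m = 0 \Leftrightarrow c = \mathbf{0}$, completing the proof.
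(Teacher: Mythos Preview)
Your proof is correct and follows essentially the same approach as the paper: the upper bound comes from the secant gap in Lemma \ref{Lm_SmallLargeSecants}, and the existence part is a pencil-count through a point of the support. The only organisational difference is that the paper first establishes non-emptiness of the set by counting through a hole $P\notin\supp(c)$ and then separately handles the equivalence $m=0\Leftrightarrow c=\zero$ via a point $P\in\supp(c)$, whereas you do both at once through a single point $P\in\supp(c)$; your version is marginally more economical but not materially different.
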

\begin{proof}
    The value $m$ is clearly well-defined if the set over which the $\max$-operator is taken is a non-empty, finite set.
    If the set is empty, by Lemma \ref{Lm_SmallLargeSecants}, all lines through a certain point $P\notin\supp(c)$ would intersect $\supp(c)$ in at least $q-\Delta_{n,q}+2$ points, hence we would obtain
    \begin{equation}\label{Eq_ThickSecantContradiction}
        \wt(c)\geq\theta_{n-1,q}(q-\Delta_{n,q}+2)>W(n,q)\textnormal{,}
    \end{equation}
    a contradiction.
    By Lemma \ref{Lm_SmallLargeSecants}, $m\in\{0,1,\dots,\Delta_{n,q}\}$.
    Note that if $m=0$, then $c=\zero$, as else we can consider all lines through a certain point $P\in\supp(c)$ and obtain a contradiction similar to \eqref{Eq_ThickSecantContradiction}.
\end{proof}

\begin{lm}\label{Lm_ThinSpaceUpperBound}
    Let $c\in\mc_{n-1}(n,q)$ with $\wt(c)\leq W(n,q)$ and let $\kappa$ be a thin $k$-space, $k\in\{1,2,\dots,n\}$, such that $\restr{c}{\kappa}$ is a linear combination of at most $\left\lceil\frac{\wt(\restr{c}{\kappa})}{\theta_{k-1,q}}\right\rceil$ different $(k-1)$-subspaces of $\kappa$.
    Consider the (by Lemma \ref{Lm_SubspaceIsCodeWord} and \ref{Lm_ExistsSecant} well-defined) value
    \[
        m_\kappa:=\max\{a\in\NN:\textnormal{there exists a thin }a\textnormal{-secant to }\supp(\restr{c}{\kappa})\}\textnormal{.}
    \]
    Then $\restr{c}{\kappa}$ is a linear combination of precisely $m_\kappa$ $(k-1)$-subspaces of $\kappa$.
    As a consequence,
    \[
        \wt(\restr{c}{\kappa})\leq m_\kappa q^{k-1}+\theta_{k-2,q}\leq\Delta_{n,q}q^{k-1}+\theta_{k-2,q}\textnormal{.}
    \]
\end{lm}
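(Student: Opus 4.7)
My plan is to fix an expansion $\restr{c}{\kappa}=\sum_{i=1}^{s}\alpha_i f_{\kappa_i}$ as provided by the hypothesis, with distinct $(k-1)$-subspaces $\kappa_i\subseteq\kappa$ and nonzero scalars $\alpha_i\in\mathbb{F}_p$, where $s\leq\lceil\wt(\restr{c}{\kappa})/\theta_{k-1}\rceil$. Since $\kappa$ is thin, this yields $s\leq\Delta_{k,q}-1$, and in particular $s\leq\Delta_{1,q}-1=W(1,q)$. The case $k=1$ is trivial because the $\kappa_i$ are then points and $s=m_\kappa=\wt(\restr{c}{\kappa})$, so I focus on $k\geq 2$. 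The main goal is to prove $m_\kappa=s$, from which the stated weight estimate follows immediately.

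For the upper bound $m_\kappa\leq s$, I would analyse each line $\ell\subseteq\kappa$ according to the set $J:=\{i:\ell\subseteq\kappa_i\}$, $t:=|J|$. If $t=0$, the line $\ell$ meets every $\kappa_i$ in a single point, so $\wt(\restr{c}{\ell})\leq s$. If $t\geq 1$, setting $\sigma:=\sum_{i\in J}\alpha_i$, the case $\sigma=0$ gives $\wt(\restr{c}{\ell})\leq s-t\leq s$, while the case $\sigma\neq 0$ gives $\wt(\restr{c}{\ell})\geq q+1-(s-t)\geq q+1-s$, a value exceeding $W(1,q)$ under the standing assumptions on $q$, so such a line is not thin. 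Consequently every thin line of $\kappa$ carries at most $s$ points of $\supp(\restr{c}{\kappa})$.

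For the lower bound $m_\kappa\geq s$, I would construct a thin $s$-secant explicitly. Since $s<q$, a point $P\in\kappa\setminus\bigcup_i\kappa_i$ exists. Among the $\theta_{k-1}$ lines of $\kappa$ through $P$, I call a line `bad' if it passes through some point of $\kappa_i\cap\kappa_j$ for $i<j$; since each intersection has size at most $\theta_{k-2}$, the total count of bad lines is at most $\binom{s}{2}\theta_{k-2}$. A routine check shows $\binom{s}{2}\leq q$ for $k\geq 2$ under the hypotheses on $q$, so the bad lines number at most $q\theta_{k-2}<\theta_{k-1}$, and a `good' line $\ell$ through $P$ exists. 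By construction $\ell$ meets each $\kappa_i$ in a distinct point, hence $\wt(\restr{c}{\ell})=s$, and since $s\leq W(1,q)$ the line $\ell$ is a thin $s$-secant.

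Finally, the weight bound comes from the telescoping cover estimate
\[
    \wt(\restr{c}{\kappa})\leq\Big|\bigcup_{i=1}^{s}\kappa_i\Big|\leq\theta_{k-1}+(s-1)q^{k-1}=s\,q^{k-1}+\theta_{k-2}=m_\kappa q^{k-1}+\theta_{k-2},
\]
while $m_\kappa\leq\Delta_{n,q}$ is immediate from Lemma \ref{Lm_SmallLargeSecants}, as any thin line has at most $\Delta_{n,q}$ points of $\supp(c)$ (since $W(1,q)<q-\Delta_{n,q}+2$). The main obstacle is the lower-bound construction in the third paragraph: one must verify carefully that the bad lines through $P$ are strictly fewer than the total lines of $\kappa$ through $P$, a step that reduces to the numerical inequality $\binom{s}{2}\leq q$ and is precisely where the quantitative bounds $q\geq 2^{2n-4}$ (resp.\ $p^2\geq 2^{2n}$) assumed at the start of the section come into play.
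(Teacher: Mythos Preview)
Your argument is correct and follows the same route as the paper: handle $k=1$ trivially, bound $m_\kappa\leq s$ by analysing how an arbitrary line meets the $\kappa_i$, then exhibit a thin $s$-secant by picking a point $P\notin\bigcup_i\kappa_i$ and a line through $P$ avoiding all pairwise intersections $\kappa_i\cap\kappa_j$, and finally read off the weight bound from $\big|\bigcup_i\kappa_i\big|\leq sq^{k-1}+\theta_{k-2}$. One small inaccuracy in your commentary: the inequality $\binom{s}{2}\leq q$ already follows from $s\leq\Delta_{k,q}-1\leq\Delta_{2,q}-1\leq\sqrt{q}-1$ for $k\geq2$ and does not require the standing hypothesis $q\geq 2^{2n-4}$; that hypothesis is used elsewhere (to guarantee $\Delta_{n,q}\geq1$), not in this counting step.
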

\begin{proof}
    Note that the second inequality follows from Lemma \ref{Lm_ExistsSecant}.
    
    If $k=1$, then $\restr{c}{\kappa}$ is obviously a linear combination of $\wt(\restr{c}{\kappa})$ points of the line $\kappa$ and, as $m_\kappa=\wt(\restr{c}{\kappa})$, the first inequality is trivially true.
    Hence, we can assume that $k\geq2$.
    
    By assumption, $\restr{c}{\kappa}$ is a linear combination of at most $\left\lceil\frac{W(k,q)}{\theta_{k-1,q}}\right\rceil=\Delta_{k,q}-1$ different $(k-1)$-subspaces of $\kappa$; denote this set of subspaces by $K$.
    Note that $|K|\geq m_\kappa$, as else there cannot exist an $m_\kappa$-secant to $\supp(\restr{c}{\kappa})$, in contradiction with the definition of $m_\kappa$.
    
    Furthermore, denote the set of points that are contained in at least $i$ subspaces of $K$ by $\mathcal{P}_i(K)$ ($i=1,2$).
    Note that given a $k$-subspace of $\kappa$, other $k$-subspaces each cover at most $q^k$ additional points.
    Inductively exploiting this argument yields that any set of $k$-subspaces cover the largest number of points when sharing a common $(k-2)$-subspace of $\kappa$.
    Using this, together with the fact that $|K|\leq \Delta_{k,q}-1$, we get
    \begin{equation}\label{Eq_PointsInOneSubspaces}
        \wt(\restr{c}{\kappa})\leq|\mathcal{P}_1(K)|\leq|K|q^{k-1}+\theta_{k-2,q}\leq\frac{1}{2^{k-2}}q^{k-1}\sqrt{q}+\theta_{k-2,q}\leq q^{k-1}\sqrt{q}+\theta_{k-2,q}<\theta_{k,q}\textnormal{,}
    \end{equation}
    where the latter inequalities hold as $k\geq2$.
    Moreover, as any two distinct subspaces of the set $K$ have at most $\theta_{k-2,q}$ points in common, we get
    \begin{equation}\label{Eq_PointsInTwoSubspaces}
        |\mathcal{P}_2(K)|\leq\binom{|K|}{2}\theta_{k-2,q}=\frac{|K|(|K|-1)}{2}\theta_{k-2,q}\leq\frac{1}{2}\left(\frac{1}{2^{k-2}}\sqrt{q}\right)^2\theta_{k-2,q}=\frac{1}{2^{2k-3}}q\theta_{k-2,q}<\frac{1}{2}\theta_{k-1,q}\textnormal{,}
    \end{equation}
    where the last inequality holds due to $k\geq2$.
    
    By \eqref{Eq_PointsInOneSubspaces}, we can consider a point $Q\in\kappa\setminus\mathcal{P}_1(K)$.
    If every line in $\kappa$ through $Q$ contains a point of $\mathcal{P}_2(K)$, then $|\mathcal{P}_2(K)|\geq\theta_{k-1,q}$, which contradicts \eqref{Eq_PointsInTwoSubspaces}.
    Hence, there exists a line $l$ in $\kappa$ through $Q$ which intersects $\supp(c)$ only in points of $\mathcal{P}_1(K)\setminus\mathcal{P}_2(K)$.
    As a result, $l$ is a $|K|$-secant to $\supp(c)$.
    By the definition of $m_\kappa$ and Lemma \ref{Lm_SmallLargeSecants}, either $|K|\leq m_\kappa$ or $q-\Delta_{n,q}+2\leq|K|\leq\Delta_{k,q}-1$.
    As $n\geq3$ and $k\geq2$, the latter implies that
    \[
        q-\frac{1}{2}\sqrt{q}+2\leq\sqrt{q}-1\textnormal{,}
    \]
    which is false for any $q$.
    Hence, we conclude that $|K|\leq m_\kappa$ and therefore $|K|= m_\kappa$.
   
    The desired upper bound on $\wt(\restr{c}{\kappa})$ is obtained by the first two inequalities of \eqref{Eq_PointsInOneSubspaces}.
\end{proof}

As a side note, we remind the reader of the following basic result.

\begin{prop}\label{Prop_FloorTrick}
    For any $a,b\in\mathbb{R}$ we have that $\lfloor a\rfloor+\lfloor b\rfloor\leq\lfloor a+b\rfloor$.
    As a consequence, for any $m\in\NN$ and $a\in\mathbb{R}$, we obtain that $m\lfloor a\rfloor\leq\lfloor ma\rfloor$.
    In particular, we get that $2\Delta_{n,q}\leq\Delta_{n-1,q}$.
\end{prop}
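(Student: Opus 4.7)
The plan is to prove the three assertions in order, using only elementary properties of the floor function, and to be careful to match the formal parameters to the specific definitions of $\Delta_{n,q}$ and $\Delta_{n-1,q}$ in the two cases $h>2$ and $h=2$.

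For the first inequality $\lfloor a\rfloor+\lfloor b\rfloor\leq\lfloor a+b\rfloor$, I would write $a=\lfloor a\rfloor+\{a\}$ and $b=\lfloor b\rfloor+\{b\}$ with fractional parts in $[0,1)$, so that $a+b=\big(\lfloor a\rfloor+\lfloor b\rfloor\big)+\big(\{a\}+\{b\}\big)$. Since the right-hand summand in parentheses is non-negative and $\lfloor a\rfloor+\lfloor b\rfloor\in\mathbb Z$ is at most $a+b$, the definition of the floor as the largest integer $\leq a+b$ immediately yields $\lfloor a\rfloor+\lfloor b\rfloor\leq\lfloor a+b\rfloor$.

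For the consequence $m\lfloor a\rfloor\leq\lfloor ma\rfloor$, I would proceed by induction on $m\in\NN$. The cases $m=0,1$ are trivial, and the inductive step applies the first inequality with the pair $\big((m-1)a,\,a\big)$, combined with the induction hypothesis $(m-1)\lfloor a\rfloor\leq\lfloor (m-1)a\rfloor$, giving
\[
    m\lfloor a\rfloor=(m-1)\lfloor a\rfloor+\lfloor a\rfloor\leq\lfloor (m-1)a\rfloor+\lfloor a\rfloor\leq\lfloor (m-1)a+a\rfloor=\lfloor ma\rfloor\textnormal{.}
\]

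Finally, for the particular case $2\Delta_{n,q}\leq\Delta_{n-1,q}$, I would simply apply the consequence with $m=2$. If $h>2$, take $a=\frac{1}{2^{n-2}}\sqrt{q}$, so that $2a=\frac{1}{2^{n-3}}\sqrt{q}$ and the inequality becomes $2\lfloor\frac{1}{2^{n-2}}\sqrt{q}\rfloor\leq\lfloor\frac{1}{2^{n-3}}\sqrt{q}\rfloor$, which is exactly $2\Delta_{n,q}\leq\Delta_{n-1,q}$ by definition. If $h=2$, take $a=\frac{p}{2^n}$ and argue identically with $2a=\frac{p}{2^{n-1}}$. There is no real obstacle here: the only thing to watch out for is to match the parameter in the ``$m\lfloor a\rfloor\leq\lfloor ma\rfloor$'' consequence to the correct expression inside the floor in each of the two case-distinctions of the definition of $\Delta_{\cdot,q}$.
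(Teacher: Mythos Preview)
Your proof is correct. The paper itself provides no proof for this proposition; it is stated as a reminder of a basic result, so your elementary argument via fractional parts, induction on $m$, and the case-split on $h$ is entirely in keeping with what the authors intend.
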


The following lemma will be crucial in the proof of our main result.
\begin{lm}\label{Lm_ThinOrThick}
    Let $c\in\mc_{n-1}(n,q)$ with $\wt(c)\leq W(n,q)$ and let $\kappa$ be a $k$-space, $k\in\{1,2,\dots,n-1\}$, with the property that for every thin $j$-subspace $\gamma$ of $\kappa$, $j\in\{1,2,\dots,k-1\}$, $\restr{c}{\gamma}$ is a linear combination of exactly $\left\lceil\frac{\wt(\restr{c}{\gamma})}{\theta_{j-1,q}}\right\rceil$ different $(j-1)$-subspaces of $\gamma$.
    Then $\kappa$ is either thin or thick with respect to $c$.
\end{lm}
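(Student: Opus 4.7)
My plan is to argue by contradiction, assuming $\kappa$ is neither thin nor thick, that is $W(k,q)<\wt(\restr{c}{\kappa})<U(n,k,q)$. The case $k=1$ is handled directly by Lemma \ref{Lm_SmallLargeSecants} together with the identity $U(n,1,q)=q-\Delta_{n,q}+2$, so I may assume $k\geq 2$ throughout. A short estimate from Proposition \ref{Prop_ThickSpaceLowerBound} yields $U(n,k,q)<\theta_k$, so $\wt(\restr{c}{\kappa})<\theta_k$ forces the existence of a point $P\in\kappa\setminus\supp(c)$, i.e.\ a hole in $\kappa$.

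The next step is to show that at least one of the $\theta_{k-1}$ $(k-1)$-subspaces of $\kappa$ through $P$ must be thin. If not, every such subspace is thick, and the standard incidence count (using that $P$ is a hole) gives
\[
    \theta_{k-2}\,\wt(\restr{c}{\kappa})\;=\;\sum_{\gamma}\wt(\restr{c}{\gamma})\;\geq\;\theta_{k-1}\,U(n,k-1,q)\textnormal{,}
\]
where $\gamma$ ranges over all $(k-1)$-subspaces of $\kappa$ containing $P$. Expanding the definition of $U(n,i,q)$ gives $qU(n,k-1,q)\geq U(n,k,q)$ (with equality only when $k=2$), and using $\theta_{k-1}=q\theta_{k-2}+1>q\theta_{k-2}$ the displayed inequality becomes $\wt(\restr{c}{\kappa})> U(n,k,q)$, a contradiction.

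Select then a thin $(k-1)$-subspace $\gamma$ of $\kappa$ through $P$. The lemma's hypothesis applied at $j=k-1$ tells us that $\restr{c}{\gamma}$ is a linear combination of $m_\gamma\leq\Delta_{n,q}$ distinct $(k-2)$-subspaces of $\gamma$, and Lemma \ref{Lm_ThinSpaceUpperBound} then bounds $\wt(\restr{c}{\gamma})\leq\Delta_{n,q}q^{k-2}+\theta_{k-3}$. To derive the final contradiction, I would rerun the incidence count through $P$, this time singling out $\gamma$ and splitting the remaining $\theta_{k-1}-1$ hyperplanes of $\kappa$ through $P$ into $t$ thick ones (each of weight at least $U(n,k-1,q)$) and $\theta_{k-1}-1-t$ thin ones (each of weight at most $\Delta_{n,q}q^{k-2}+\theta_{k-3}$). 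The assumption $\wt(\restr{c}{\kappa})>W(k,q)$ produces a lower bound on $t$; the assumption $\wt(\restr{c}{\kappa})<U(n,k,q)$ produces an upper bound on $t$; and the task is to show these bounds are incompatible, exploiting $\Delta_{k,q}\geq 2^{n-k}\Delta_{n,q}$ (iterated Proposition \ref{Prop_FloorTrick}) and the size of $q$ in the two cases $h>2$ and $h=2$.

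The main obstacle is this final balancing step. A naive single-hole count is easily too coarse: the $\theta_{k-1}-1\sim q^{k-1}$ thin hyperplanes through $P$ can together contribute of order $\Delta_{n,q}q^{2k-3}$ to the sum, which is already comparable to $\theta_{k-2}\wt(\restr{c}{\kappa})$ for weights of order $\Delta_{n,q}q^{k-1}$, well below $W(k,q)\sim\Delta_{k,q}q^{k-1}$. Forcing a genuine contradiction therefore demands carefully using the factor $2^{n-k}$ separating $\Delta_{n,q}$ from $\Delta_{k,q}$, handling the floor functions in $\Delta_{i,q}$ and $U(n,i,q)$ separately in the cases $h>2$ and $h=2$, and, if necessary, sharpening the single-hole estimate by averaging over the at least $\Delta_{n,q}q^{k-1}-1$ holes in $\kappa$ guaranteed by Proposition \ref{Prop_ThickSpaceLowerBound}, or by performing a global double count over all $(k-1)$-subspaces of $\kappa$ while feeding in the structural information at lower dimensions $j<k-1$ provided by the lemma's hypothesis.
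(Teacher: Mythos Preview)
Your setup through the existence of a thin $(k-1)$-subspace is fine, and the verification that all $(k-1)$-spaces through a hole being thick forces $\wt(\restr{c}{\kappa})\geq U(n,k,q)$ is correct. The gap is exactly where you locate it: the single-hole double count in your final step cannot close. Through a hole $P$ there are $\theta_{k-1}$ hyperplanes of $\kappa$, and even if only a constant fraction of them are thin, the thin contribution to $\sum_\gamma\wt(\restr{c}{\gamma})$ is of order $\Delta_{n,q}q^{2k-3}$, which matches $\theta_{k-2}\cdot W(k,q)$ up to the factor $\Delta_{k,q}/\Delta_{n,q}=2^{n-k}$; this is not enough slack to separate the lower bound on $t$ coming from $\wt(\restr{c}{\kappa})>W(k,q)$ from the upper bound coming from $\wt(\restr{c}{\kappa})<U(n,k,q)$, because the thick and thin per-hyperplane weights differ only by a factor of order $q/\Delta_{n,q}$ while the number of hyperplanes through $P$ is of order $q^{k-1}$. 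Averaging over holes does not help: the same slack problem recurs globally.

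The paper abandons the count through a point and instead counts the $q+1$ hyperplanes of $\kappa$ through a \emph{$(k-2)$-subspace}. The crucial step is to manufacture a thin $(k-2)$-space $\gamma$ that lies in at most one thin $(k-1)$-space of $\kappa$; then at least $q$ of the $q+1$ hyperplanes through $\gamma$ are thick, and combining $q\cdot U(n,k-1,q)$ with the sharp bound $\wt(\restr{c}{\gamma})\leq\Delta_{n,q}\lfloor q^{k-3}\rfloor+\theta_{k-4}$ from Lemma~\ref{Lm_ThinSpaceUpperBound} gives exactly $\wt(\restr{c}{\kappa})\geq U(n,k,q)$. Producing such a $\gamma$ is itself nontrivial: one first shows (by finding $\Delta_{n,q}$ thick $(k-1)$-spaces through an arbitrary $(k-2)$-space, then a point of $\supp(c)$ outside their union, then $\frac{3}{4}q^{k-1}$ thick lines through it) that $\wt(\restr{c}{\kappa})>\frac{3}{4}q^k-O(q^{k-1}\sqrt{q})$, and then argues that if every thin $(k-2)$-space lay in two thin $(k-1)$-spaces, a hole-count inside $\kappa$ would force $\theta_k>\frac{5}{4}q^k-O(q^{k-1}\sqrt{q})$, a contradiction. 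That two-stage structure---a coarse $\tfrac{3}{4}q^k$ lower bound, then a pigeonhole on thin $(k-2)$-spaces---is the missing idea in your outline.
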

\begin{proof}
    We will prove this by induction on $k$.
    The base case $k=1$ follows from Lemma \ref{Lm_SmallLargeSecants}.
    Hence, we can assume that $k\geq2$ and that every $j$-subspace of $\kappa$, $1\leq j<k$, is either thin or thick with respect to $c$.
    
    Suppose that $\kappa$ is not thin; we will prove that $\kappa$ is thick w.r.t.\ $c$.
    
    \bigskip
    \underline{Claim $1$}: $\wt(\restr{c}{\kappa})>\frac{3}{4}q^k-\frac{3}{8}q^{k-1}\sqrt{q}+\frac{3}{4}q^{k-1}$.
    
    \bigskip
    Consider an arbitrary $(k-2)$-subspace $\gamma$ of $\kappa$.
    If every $(k-1)$-subspace of $\kappa$ through $\gamma$ is thick, by Proposition \ref{Prop_ThickSpaceLowerBound}, we obtain
    \[
        \wt(\restr{c}{\kappa})\geq(q+1)(q^{k-1}-\Delta_{n,q}q^{k-2}+1)\textnormal{.}
    \]
    One can easily check that the above inequality proves the claim.
    
    If we denote by $x$ the number of $(k-1)$-subspaces of $\kappa$ through $\gamma$ that are thick, we can now assume that $x<q+1$; hence consider a $(k-1)$-subspace $\lambda$ through $\gamma$ that is not thick, then $\lambda$ is thin by the induction hypothesis.
    By Lemma \ref{Lm_ThinSpaceUpperBound}, this means that $\lambda$ contains at most $\Delta_{n,q}q^{k-2}+\theta_{k-3,q}$ points of $\supp(\restr{c}{\kappa})$.
    However, by the known structure of $\restr{c}{\lambda'}$ for every thin $(k-1)$-subspace $\lambda'$ through $\gamma$ (namely, $\restr{c}{\lambda'}$ is a linear combination of at most $\Delta_{n,q}$ $(k-2)$-subspaces), we know that such $\lambda'$ contains at most $\Delta_{n,q}q^{k-2}$ points lying in $\lambda'\setminus\gamma$.
    
    Consider all $(k-1)$-spaces through $\gamma$:
    \begin{itemize}
        \item $x$ of them are thick and contain at most $q^{k-1}$ points outside of $\gamma$;
        \item $q-x$ of them are thin and not equal to $\lambda$, and contain at most $\Delta_{n,q}q^{k-2}$ points of $\supp(\restr{c}{\kappa})$ outside of $\gamma$;
        \item $1$ of them is $\lambda$ and contains at most $\Delta_{n,q}q^{k-2}+\theta_{k-3,q}$ points of $\supp(\restr{c}{\kappa})$ ($\gamma$ included).
    \end{itemize}
    
    Hence,
    \begin{align*}
        \wt(\restr{c}{\kappa})&\leq xq^{k-1}+(q-x)\Delta_{n,q}q^{k-2}+\Delta_{n,q}q^{k-2}+\theta_{k-3,q}\\
        &=x(q^{k-1}-\Delta_{n,q}q^{k-2})+\Delta_{n,q}q^{k-1}+\Delta_{n,q}q^{k-2}+\theta_{k-3,q}\textnormal{.}
    \end{align*}
    Therefore, as $\kappa$ is assumed not to be thin, we get
    \[
        (\Delta_{k,q}-1)\theta_{k-1,q}+1=W(k,q)+1\leq\wt(\restr{c}{\kappa})\leq x(q^{k-1}-\Delta_{n,q}q^{k-2})+\Delta_{n,q}q^{k-1}+\Delta_{n,q}q^{k-2}+\theta_{k-3,q}\textnormal{.}
    \]
    Note that $q^{k-1}-\Delta_{n,q}q^{k-2}\geq1$ (as $n\geq3$).
    Hence, we can rearrange the above inequalities, taking into account that $k\leq n-1$, to get
    \begin{align*}
        x&\geq\frac{(\Delta_{k,q}-1)\theta_{k-1,q}+1-\big(\Delta_{n,q}q^{k-1}+\Delta_{n,q}q^{k-2}+\theta_{k-3,q}\big)}{q^{k-1}-\Delta_{n,q}q^{k-2}}\\
        &=\frac{(\Delta_{k,q}-\Delta_{n,q}-1)(q^{k-1}+q^{k-2})+(\Delta_{k,q}-2)\theta_{k-3,q}+1}{q^{k-1}-\Delta_{n,q}q^{k-2}}\\
        &\geq\frac{(\Delta_{n-1,q}-\Delta_{n,q}-1)(q^{k-1}+q^{k-2})+(\Delta_{n-1,q}-2)\theta_{k-3,q}+1}{q^{k-1}-\Delta_{n,q}q^{k-2}}\textnormal{.}
    \end{align*}
    As $\Delta_{n-1,q}\geq2\Delta_{n,q}$ by Proposition \ref{Prop_FloorTrick}, we obtain
    \begin{align*}
        x&\geq\frac{(\Delta_{n,q}-1)q^{k-1}+(\Delta_{n,q}-1)q^{k-2}+(2\Delta_{n,q}-2)\theta_{k-3,q}+1}{q^{k-1}-\Delta_{n,q}q^{k-2}}\\
        &=\Delta_{n,q}-1+\frac{\big(\Delta_{n,q}^2-1\big)q^{k-2}+(2\Delta_{n,q}-2)\theta_{k-3,q}+1}{q^{k-1}-\Delta_{n,q}q^{k-2}}\textnormal{.}
    \end{align*}
    %%%%%%%%%%%%%%%%%%%%%%%%%%%%%%%%%%%%%%%%%%%%%%%%%%
    %\textcolor{red}{Note to self ($2/3$): it's due to this that we need $W(k,q)\cong\frac{1}{2}W(k-1,q)$, hence $W(k,q)\cong\frac{1}{2^{n-2}}(\lfloor\sqrt{q}\rfloor-1) q^{n-1}$.
    %Even more, the coefficient has to be glued to $\lfloor\sqrt{q}\rfloor$.}
    %%%%%%%%%%%%%%%%%%%%%%%%%%%%%%%%%%%%%%%%%%%%%%%%%%
    
    Using the fact that $\Delta_{n,q}\geq1$, we get
    \begin{align*}
        x&\geq\Delta_{n,q}-1+\frac{1}{q^{k-1}-\Delta_{n,q}q^{k-2}}\\
        \Longrightarrow\quad x&\geq\Delta_{n,q}\textnormal{,}
    \end{align*}
    as $x$ is an integer.
    %%%%%%%%%%%%%%%%%%%%%%%%%%%%%%%%%%%%%%%%%%%%%%%%%%
    %\textcolor{red}{Note to self ($3/3$): it's due to this that we want the highest-degree coefficient of $W(k,q)$ to be an integer, hence $W(k,q)\cong\left(\lfloor\frac{1}{2^{n-2}}\sqrt{q}\rfloor-1\right)q^{n-1}$.}
    %%%%%%%%%%%%%%%%%%%%%%%%%%%%%%%%%%%%%%%%%%%%%%%%%%
    Hence, we can find a set $K$ of $\Delta_{n,q}$ distinct, thick $(k-1)$-subspaces of $\kappa$ through $\gamma$.
    Let $\mathcal{P}_1(K)$ be the point set of all points contained in at least one of the subspaces of $K$.
    Note that, as $k\leq n-1$, $|\mathcal{P}_1(K)|\leq W(k,q)$.
    Indeed, one can check that
    \begin{align*}
        |\mathcal{P}_1(K)|\leq\Delta_{n,q}q^{k-1}+\theta_{k-2,q}&\leq(\Delta_{k,q}-1)q^{k-1}+\theta_{k-2,q}\\
        &=(\Delta_{k,q}-1)\theta_{k-1,q} +\theta_{k-2,q}(2-\Delta_{k,q})\\
        &\leq (\Delta_{k,q}-1)\theta_{k-1,q}=W(k,q),
    \end{align*}
    where we used the fact that $\Delta_{k,q}\geq\Delta_{n-1,q}\geq\lfloor\frac{1}{2^{n-3}}\sqrt{q}\rfloor\geq2$ as $q\geq 2^{2n-4}$, which also implies $\Delta_{n,q}\leq\frac{1}{2}\Delta_{k,q}\leq\Delta_{k,q}-1$ (Proposition \ref{Prop_FloorTrick}).
    As $W(k,q)<\wt(\restr{c}{\kappa})$, we can find a point $P\in\supp(\restr{c}{\kappa})\setminus\mathcal{P}_1(K)$.
    
    Each of the spaces in $K$ are thick with respect to $c$, hence, by Proposition \ref{Prop_ThickSpaceLowerBound}, each of these subspaces contains less than $\Delta_{n,q}q^{k-2}$ holes of $\restr{c}{\kappa}$.
    Hence, as $n\geq3$, there exist at most $|K|\Delta_{n,q}q^{k-2}+\theta_{k-2,q}\leq\Delta_{n,q}^2q^{k-2}+\theta_{k-2,q}\leq\frac{1}{4}q^{k-1}+\theta_{k-2,q}$ points of $\mathcal{P}_1(K)$ that are either holes of $\restr{c}{\kappa}$ or lie in $\gamma$.
    As a result, there are at least $\theta_{k-1,q}-\frac{1}{4}q^{k-1}-\theta_{k-2,q}=\frac{3}{4}q^{k-1}$ lines in $\kappa$ through $P$ that intersect each subspace of $K$ in a distinct point of $\supp(c)$, since such lines are not contained in any subspace of $K$.
    As each of these lines intersects $\supp(c)$ in at least $\Delta_{n,q}+1$ points, by Lemma \ref{Lm_SmallLargeSecants}, we get
    \begin{align*}
        \wt(\restr{c}{\kappa})&\geq\frac{3}{4}q^{k-1}(q-\Delta_{n,q}+2-1)+1\\
        &=\frac{3}{4}q^{k}-\frac{3}{4}\Delta_{n,q}q^{k-1}+\frac{3}{4}q^{k-1}+1\\
        &>\frac{3}{4}q^{k}-\frac{3}{8}q^{k-1}\sqrt{q}+\frac{3}{4}q^{k-1}\textnormal{,}
    \end{align*}
    proving Claim $1$.
    
    \bigskip
    \underline{Claim $2$}: either $\kappa$ is thick, or there exists a thin $(k-1)$-subspace of $\kappa$.
    
    \bigskip
    Suppose that there does not exist a thin $(k-1)$-subspace of $\kappa$.
    Consider an arbitrary $(k-2)$-subspace $\gamma$ of $\kappa$.
    By the induction hypothesis, every $(k-1)$-subspace of $\kappa$ through $\gamma$ is thick.
    Hence, we obtain
    \begin{equation}\label{Eq_InitialLowerBound}
        \wt(\restr{c}{\kappa})\geq(q+1)U(n,k-1,q)-q\theta_{k-2,q}\textnormal{.}
    \end{equation}
    First, let us consider the case $k=2$.
    If all points of the plane $\kappa$ are points of $\supp(c)$, then $\kappa$ is trivially thick w.r.t.\ $c$ and the proof of the claim is done.
    Hence, we can rechoose $\gamma$ to be a hole of $\restr{c}{\kappa}$ to improve the inequality of \eqref{Eq_InitialLowerBound} and obtain
    \[
        \wt(\restr{c}{\kappa})\geq(q+1)U(n,1,q)=q^2-\Delta_{n,q}q+3q-\Delta_{n,q}+2=U(n,2,q)+q-\Delta_{n,q}+2\geq U(n,2,q)\textnormal{.}
    \]
    Hence, we can assume that $k\geq3$ and expand the right-hand side of \eqref{Eq_InitialLowerBound} to obtain the following.
    \begin{align*}
        \wt(\restr{c}{\kappa})&\geq(q+1)\Big(q^{k-1}-(\Delta_{n,q}-2)q^{k-2}-(k-3)\big((q-1)\Delta_{n,q}+1\big)\lfloor q^{k-4}\rfloor+\theta_{k-4,q}\Big)-q\theta_{k-2,q}\\
        &=q^k-(\Delta_{n,q}-2)q^{k-1}-(k-3)\big((q-1)\Delta_{n,q}+1\big)q\lfloor q^{k-4}\rfloor\\
        &\qquad+q^{k-1}-(\Delta_{n,q}-2)q^{k-2}-(k-3)\big((q-1)\Delta_{n,q}+1\big)\lfloor q^{k-4}\rfloor\\
        &\qquad+q\theta_{k-4,q}+\theta_{k-4,q}-q^{k-1}- q^{k-2}-q\theta_{k-4,q}\textnormal{.}
    \end{align*}
    Since $\lfloor q^{k-4}\rfloor\leq q^{k-4}$,
    \begin{align*}
        \wt(\restr{c}{\kappa})&\geq U(n,k,q)+\big((q-1)\Delta_{n,q}-q\big)q^{k-3}-(\Delta_{n,q}-2)q^{k-2}-(k-3)\big((q-1)\Delta_{n,q}+1\big)q^{k-4}\\
        &= U(n,k,q)+q^{k-2}-(k-2)\Delta_{n,q}q^{k-3}+(k-3)\Delta_{n,q}q^{k-4}-(k-3)q^{k-4}\\
        &\geq U(n,k,q)+q^{k-2}-(n-3)\Delta_{n,q}q^{k-3}\textnormal{,}
    \end{align*}
    where we used the facts that $k\leq n-1$ and $\Delta_{n,q}\geq1$ to get the latter inequality.
    As one can easily check that $\frac{n-3}{2^{n-2}}\leq\frac{1}{4}$, we get
    \[
        \wt(\restr{c}{\kappa})\geq U(n,k,q)+q^{k-2}-\frac{1}{4}q^{k-3}\sqrt{q}= U(n,k,q)+q^{k-3}\sqrt{q}\left(\sqrt{q}-\frac{1}{4}\right)\geq U(n,k,q)\textnormal{.}
    \]
    
    \bigskip
    \underline{Claim $3$}: either $\kappa$ is thick, or there exists a thin $(k-2)$-subspace of $\kappa$ that is contained in at most one thin $(k-1)$-subspace of $\kappa$.
    
    \bigskip
    Suppose to the contrary that $\kappa$ is not thick and suppose that every thin $(k-2)$-subspace of $\kappa$ is contained in at least two distinct, thin $(k-1)$-subspaces of $\kappa$.
    By Claim $2$, there exists a thin $(k-1)$-subspace $\lambda$ of $\kappa$.
    
    We first prove Claim $3$ in case $k=2$.
    As $\lambda$ is thin, by Lemma \ref{Lm_SmallLargeSecants}, $\lambda$ is an $m$-secant to $\supp(c)$ with $m\leq\Delta_{n,q}$; hence, $\lambda$ contains at least $q-\Delta_{n,q}+1$ holes of $\supp(\restr{c}{\kappa})$.
    Moreover, the assumed negation of Claim $3$ implies that through each of these holes there exists at least one other $m'$-secant to $\supp(\restr{c}{\kappa})$ with $m'\leq\Delta_{n,q}$.
    As a consequence, we obtain that
    \begin{align*}
        |\kappa\setminus\supp(\restr{c}{\kappa})|&\geq(q-\Delta_{n,q}+1)+(q-\Delta_{n,q})+(q-\Delta_{n,q}-1)+\dots+1\\
        &=\frac{(q-\Delta_{n,q}+1)(q-\Delta_{n,q}+2)}{2}\\
        &=\frac{1}{2}q^2-\Delta_{n,q}q+\frac{3}{2}q+\frac{1}{2}\Delta_{n,q}^2-\frac{3}{2}\Delta_{n,q}+1\\
        &\geq\frac{1}{2}q^2-\frac{1}{2}q\sqrt{q}+\frac{3}{2}q-\frac{1}{2}\sqrt{q}\textnormal{,}
    \end{align*}
    where we used the facts that $\Delta_{n,q}\leq\frac{1}{2}\sqrt{q}$ and $\Delta_{n,q}\geq1$ to find the latter inequality.
    Combining this with Claim $1$, we get
    \[
        \theta_{2,q}=\wt(\restr{c}{\kappa})+|\kappa\setminus\supp(\restr{c}{\kappa})|>\frac{3}{4}q^2-\frac{3}{8}q\sqrt{q}+\frac{3}{4}q+\frac{1}{2}q^2-\frac{1}{2}q\sqrt{q}+\frac{3}{2}q-\frac{1}{2}\sqrt{q}\textnormal{,}
    \]
    which implies that
    \[
        0>\frac{1}{4}q^2-\frac{7}{8}q\sqrt{q}+\frac{5}{4}q-\frac{1}{2}\sqrt{q}-1\textnormal{,}
    \]
    which is true only if $q<4$, a contradiction.
    
    Now, suppose that $k\geq3$ for the remainder of the proof of Claim $3$.
    Although this case is very similar to the $k=2$ case, a slightly better bound on the number of holes in $\kappa$ was needed to obtain a contradiction for all considered values of $q$.
    First we claim that, within $\lambda$, there exist at least $q+1$ distinct, thin $(k-2)$-subspaces.
    
    Indeed, suppose that there exist at most $q$ thin $(k-2)$-subspaces in $\lambda$.
    As these subspaces cover at most $q\theta_{k-2,q}=\theta_{k-1,q}-1$ points of $\lambda$, we can find a point $P\in\lambda$ that is not contained in a thin $(k-2)$-subspace of $\lambda$.
    When considering a $(k-3)$-subspace $\iota$ of $\lambda$ through $P$ (this is possible as $k\geq3$), we can conclude that this subspace is not contained in any thin $(k-2)$-subspace of $\lambda$.
    By the induction hypothesis, all $(k-2)$-subspaces of $\lambda$ through $\iota$ are thick. Hence, making use of Proposition \ref{Prop_ThickSpaceLowerBound}, we obtain the following contradiction:
    \begin{align*}
        \wt(\restr{c}{\lambda})&\geq(q+1)(\theta_{k-2,q}-\Delta_{n,q}q^{k-3}+1)-q\theta_{k-3,q}\\
        &=\theta_{k-1,q}-\Delta_{n,q}q^{k-2}-\Delta_{n,q}q^{k-3}+q+1\\
        &>W(k-1,q)\textnormal{.}
    \end{align*}
    Hence, we can consider $q+1$ distinct, thin $(k-2)$-subspaces in $\lambda$.
    By the assertion we have made at the beginning of the proof of this claim, each of these subspaces must be contained in at least one thin $(k-1)$-subspace of $\kappa$ other than $\lambda$.
    Each of those $q+1$ thin $(k-1)$-subspaces contain at least $\theta_{k-1,q}-W(k-1,q)\geq q^{k-1}-q^{k-2}\sqrt{q}$ holes (using that $\Delta_{k-1,q}\leq\sqrt{q}$ as $k\geq3$).
    As two distinct $(k-1)$-subspaces intersect in at most a $(k-2)$-space, we can estimate the number of holes in $\kappa$ w.r.t.\ $c$ as follows:
    \begin{align*}
        |\kappa\setminus\supp(\restr{c}{\kappa})|&\geq(q+1)(q^{k-1}-q^{k-2}\sqrt{q})-\binom{q+1}{2}\theta_{k-2,q}\\
        &\geq(q+1)(q^{k-1}-q^{k-2}\sqrt{q})-\frac{q^k+3q^{k-1}}{2}\\
        &\geq\frac{1}{2}q^k-q^{k-1}\sqrt{q}-\frac{1}{2}q^{k-1}-q^{k-2}\sqrt{q}\textnormal{.}
    \end{align*}
    Combining this with Claim $1$, we get
    \[
        \theta_{k,q}=\wt(\restr{c}{\kappa})+|\kappa\setminus\supp(\restr{c}{\kappa})|>\frac{3}{4}q^k-\frac{3}{8}q^{k-1}\sqrt{q}+\frac{3}{4}q^{k-1}+\frac{1}{2}q^k-q^{k-1}\sqrt{q}-\frac{1}{2}q^{k-1}-q^{k-2}\sqrt{q}\textnormal{,}
    \]
    which implies that
    \[
        0>\frac{1}{4}q^k-\frac{7}{8}q^{k-1}\sqrt{q}-\frac{3}{4}q^{k-1}-q^{k-2}\sqrt{q}-\theta_{k-2,q}\textnormal{.}
    \]
    Using the fact that $\theta_{k-2,q}\leq\frac{1}{4}q^{k-2}\sqrt{q}$ (as $q\geq18$), we obtain
    \[
        0>\frac{1}{4}q\sqrt{q}-\frac{7}{8}q-\frac{3}{4}\sqrt{q}-\frac{5}{4}\textnormal{,}
    \]
    which is true only if $q\leq19$, a contradiction.
    
    \bigskip
    \underline{Claim $4$}: $\kappa$ is thick.
    
    \bigskip
    By Claim $3$, we can assume that there exists a thin $(k-2)$-subspace of $\kappa$ that is contained in at most one thin $(k-1)$-subspace of $\kappa$.
    This means that, by the induction hypothesis, this thin $(k-2)$-subspace is contained in at least $q$ thick $(k-1)$-subspaces of $\kappa$.
    Using Lemma \ref{Lm_ThinSpaceUpperBound} and Proposition \ref{Prop_FloorTrick}, we can conclude that
    \begin{align*}
        \wt(\restr{c}{\kappa})&\geq q\cdot U(n,k-1,q)-(q-1)\left(\Delta_{n,q}\lfloor q^{k-3}\rfloor+\theta_{k-4,q}\right)\\
        &=q^k-(\Delta_{n,q}-2)q^{k-1}-(k-3)\big((q-1)\Delta_{n,q}+1\big)q\lfloor q^{k-4}\rfloor+q\theta_{k-4,q}-(q-1)\left(\Delta_{n,q}\lfloor q^{k-3}\rfloor+\theta_{k-4,q}\right)\\
        &\geq q^k-(\Delta_{n,q}-2)q^{k-1}-(k-3)\big((q-1)\Delta_{n,q}+1\big)\lfloor q^{k-3}\rfloor-(q-1)\Delta_{n,q}\lfloor q^{k-3}\rfloor+\theta_{k-4,q}\\
        &=q^k-(\Delta_{n,q}-2)\lfloor q^{k-1}\rfloor-(k-2)\big((q-1)\Delta_{n,q}+1\big)\lfloor q^{k-3}\rfloor+\lfloor q^{k-3}\rfloor+\theta_{k-4,q}\\
        &=U(n,k,q)\textnormal{,}
    \end{align*}
    where we made use that $\lfloor q^{k-3}\rfloor+\theta_{k-4,q}=\theta_{k-3,q}$ for all $k\geq2$ to prove the latter equality.
\end{proof}

We're now fully equipped with the necessary tools to prove the main theorem of this section (Theorem \ref{Thm_MainHyperplane}).

\begin{thm*}
    Any $c\in\mc_{n-1}(n,q)$ with $\wt(c)\leq W(n,q)$ is a linear combination of exactly $\left\lceil\frac{\wt(c)}{\theta_{n-1,q}}\right\rceil$ different hyperplanes.
\end{thm*}
\begin{proof}
    This will be proven by induction on $n$.
    The base case is exactly Lemma \ref{Lm_PlaneCase}.
    Hence, we can assume that $n\geq3$ (as we generally assumed for all previous lemmas of this section) and assume that for every $k$-space $\kappa$, $1\leq k<n$, for which $\wt(\restr{c}{\kappa})\leq W(k,q)$, the codeword $\restr{c}{\kappa}$ is a linear combination of exactly $\left\lceil\frac{\wt(c)}{\theta_{k-1,q}}\right\rceil$ different $(k-1)$-subspaces of $\kappa$.
    As a consequence, by Lemma \ref{Lm_ThinOrThick}, every $k$-space, $1\leq k<n$, is either thin or thick w.r.t.\ $c$.
    
    By Lemma \ref{Lm_ExistsSecant}, we can define the value
    \[
        m:=\max\{a\in\NN:\textnormal{there exists a thin }a\textnormal{-secant to }\supp(c)\}\textnormal{.}
    \]
    If $c=\zero$, then the proof is done.
    Hence, by Lemma \ref{Lm_ExistsSecant}, we can assume that $m\in\{1,2,\dots,\Delta_{n,q}\}$.
    
    \bigskip
    \underline{Claim $1$}: $\wt(c)\geq mq^{n-1}-\frac{m(m-3)}{2}\theta_{n-2,q}$.
    
    \bigskip
    Consider an $m$-secant $l$ and a plane $\sigma$ through $l$.
    This plane is either thin or thick.
    If $\sigma$ is thick, then, by Proposition \ref{Prop_ThickSpaceLowerBound}, $\wt(\restr{c}{\sigma})\geq\theta_{2,q}-\Delta_{n,q}q+2\geq\Delta_{n,q}(q+1)\geq m(q+1)$.
    If $\sigma$ is thin, then we can use the induction hypothesis (as $n\geq3$) and Lemma \ref{Lm_ThinSpaceUpperBound} (as $l\subseteq\sigma$) to see that $\restr{c}{\sigma}$ is a linear combination of precisely $m$ lines of $\sigma$.
    As a consequence,
    \begin{equation}\label{Eq_LowerBoundPlaneSigma}
        \wt(\restr{c}{\sigma})\geq m(q+1)-\binom{m}{2}\textnormal{.}
    \end{equation}
    As $\sigma$ was chosen arbitrarily through $l$, we obtain the desired lower bound on $\wt(c)$:
    \[
        \wt(c)\geq\theta_{n-2,q}\left(m(q+1)-\binom{m}{2}-m\right)+m=mq^{n-1}-\frac{m(m-3)}{2}\theta_{n-2,q}\stepcounter{equation}\textnormal{.}
    \]
    
    \bigskip
    \underline{Claim $2$}: there exists a thick hyperplane.
    
    \bigskip
    Assume the contrary: all hyperplanes are thin.
    Let $\pi$ be a thin hyperplane; by Lemma \ref{Lm_ThinSpaceUpperBound}, $\restr{c}{\pi}$ is a linear combination of at most $m>0$ $(n-2)$-subspaces of $\pi$.
    Let $\gamma$ be one of these $(n-2)$-subspaces.
    Any hyperplane $\pi'$ through $\gamma$ is supposed to be thin, hence, by Lemma \ref{Lm_ThinSpaceUpperBound}, all points of $\supp(\restr{c}{\pi'})$ are covered by $\gamma$ and at most $m-1$ other $(n-2)$-subspaces of $\pi'$.
    In conclusion, we obtain the following upper bound on $\wt(c)$:
    \[
        \wt(c)\leq (q+1)(m-1)q^{n-2}+\theta_{n-2,q}=(m-1)q^{n-1}+mq^{n-2}+\theta_{n-3,q}\textnormal{.}
    \]
    Combining this with Claim $1$, we obtain
    \begin{align*}
        (m-1)q^{n-1}+mq^{n-2}+\theta_{n-3,q}&\geq mq^{n-1}-\frac{m(m-3)}{2}\theta_{n-2,q}\\
        \Longleftrightarrow\quad0&\geq2q^{n-1}-m(m-1)q^{n-2}-(m^2-3m+2)\theta_{n-3,q}.
    \end{align*}
    This yields 
    \begin{align*}
    0&\geq2q^{n-1}-m^2q^{n-2}-2m^2\theta_{n-3,q}
    \end{align*}
    implying that
    \begin{align*}
     0&\geq2q^{n-1}-\frac{1}{4}q^{n-1}-\frac{1}{2}q\theta_{n-3,q},
    \end{align*}
    where we used the fact that $m\leq\Delta_{n,q}\leq\frac{1}{2}\sqrt{q}$ to obtain the last inequality.
    As $q\geq18$, one can check that $\theta_{n-3,q}\leq\frac{1}{4}q^{n-3}\sqrt{q}$.
    Hence, we get
    \begin{align*}
        0&\geq\frac{7}{4}q^{n-1}-\frac{1}{8}q^{n-2}\sqrt{q},
    \end{align*}
    which yields $1\geq14\sqrt{q}$, a clear contradiction.

    \bigskip
    \underline{Claim $3$}: there are at least $(q-\sqrt{q})q^{n-3}$ thin planes through a fixed $m$-secant (w.r.t.\ $\supp(c)$).
    
    \bigskip
    Let $l$ be an $m$-secant to $\supp(c)$ and denote by $x$ the number of thin planes through $l$.
    By \eqref{Eq_LowerBoundPlaneSigma}, we know that each such plane contains at least $m(q+1)-\binom{m}{2}$ points of $\supp(c)$.
    Hence, by Proposition \ref{Prop_ThickSpaceLowerBound}, we obtain
    \begin{align*}
        W(n,q)\geq\wt(c)&\geq x\left(m(q+1)-\binom{m}{2}-m\right)+(\theta_{n-2,q}-x)(\theta_{2,q}-\Delta_{n,q}q+2-m)+m\\
        &=-x\left(q^2-\Delta_{n,q}q-(m-1)q+\frac{m^2-3m+6}{2}\right)+\theta_{n-2,q}\big(q^2-\Delta_{n,q}q+q-(m-3)\big)+m\\
        &\geq-x\left(q^2-\Delta_{n,q}q+q-(m-3)\right)+\theta_{n-2,q}\big(q^2-\Delta_{n,q}q+q-(m-3)\big)\\
        &=\left(\theta_{n-2,q}-x\right)\left(q^2-\Delta_{n,q}q+q-(m-3)\right)\\
        &\geq\left(\theta_{n-2,q}-x\right)\left(q^2-q\sqrt{q}\right)\textnormal{,}
    \end{align*}
    where we used the fact that $m\leq\Delta_{n,q}\leq q$ and that $-m^2+\frac{m^2-3m+6}{2}\leq-(m-3)$ to prove the third inequality and the fact that $\theta_{n-2,q}-x\geq0$ to justify the truth of the last inequality.
    Now suppose, to the contrary, that $x<(q-\sqrt{q})q^{n-3}$.
    Then
    \begin{align*}
        W(n,q)&>\left(\theta_{n-2,q}-(q-\sqrt{q})q^{n-3}\right)\left(q^2-q\sqrt{q}\right)\geq q^{n-1}(\sqrt{q}-1)\geq \theta_{n-1,q}\left(\frac{1}{2}\sqrt{q}-1\right)\geq W(n,q)\textnormal{,}
    \end{align*}
    a contradiction.
    
    %%%%%%%%%%%%%%%%%%%%%%%%%%%%%%
    % OLD CONSTRUCTIVE REASONING %
    %%%%%%%%%%%%%%%%%%%%%%%%%%%%%%
    
    \iffalse
    
    Rearranging the above inequality, we get
    \[
        x\left(q^2-\Delta_{n,q}q-(m-1)q+\frac{m^2-3m+6}{2}\right)\geq\theta_{n-2,q}\big(q^2-\Delta_{n,q}q+q-(m-3)\big)-(\Delta_{n,q}-1)\theta_{n-1,q}+m\textnormal{.}
    \]
    
    Hence, we get
    \begin{align*}
        x&\geq\frac{\theta_{n-2,q}\big(q^2-\Delta_{n,q}q+q-(m-3)\big)-(\Delta_{n,q}-1)\theta_{n-1,q}+m}{q^2-\Delta_{n,q}q-(m-1)q+\frac{m^2-3m+6}{2}}\cdot\frac{2(q-1)}{2(q-1)}\\
        \Longleftrightarrow\quad x&\geq\frac{2q^{n+1}-4\Delta_{n,q}q^n+4q^n-2mq^{n-1}+6q^{n-1}-2q^2-2\Delta_{n,q}q+2mq+2q+2\Delta_{n,q}-4m+4}{2q^3-2\Delta_{n,q}q^2-2mq^2+m^2q+2\Delta_{n,q}q-mq+4q-m^2+3m-6}
    \end{align*}
    We can lower the value of the right-hand side of the latter inequality by making use of, for example, $1\leq m\leq\Delta_{n,q}\leq\frac{1}{2}\sqrt{q}$.
    Eventually, we obtain
    \begin{align*}
        x&\geq\frac{8q^2-8q\sqrt{q}+16q-4\sqrt{q}+12}{8q^2-15q+4\sqrt{q}+12}q^{n-2}\\
        &=\left(1-\frac{8q\sqrt{q}-31q+8\sqrt{q}}{8q^2-15q+4\sqrt{q}+12}\right)q^{n-2}\\
        &\geq\left(1-\frac{1}{\sqrt{q}}\right)q^{n-2}\textnormal{,}
    \end{align*}
    where the latter inequality is true for all prime powers $q$.
    
    \fi
    
    %%%%%%%%%%%%%%%%%%%%%%%%%%%%%%
    %%%%%%%%%%%%%%%%%%%%%%%%%%%%%%
    %%%%%%%%%%%%%%%%%%%%%%%%%%%%%%
    
    \bigskip
    \underline{Claim $4$}: there are at least $(q-\sqrt{q})q^{n-3}$ thick lines in a thick hyperplane $\Pi$ through a fixed point.
    
    \bigskip
    Denote by $y$ the number of thick lines in $\Pi$ through a point $P\in\Pi$.
    Making use of Lemma \ref{Lm_SmallLargeSecants}, we get
    \[
        (\theta_{n-2,q}-y)\Delta_{n,q}+yq+1\geq\wt(\restr{c}{\Pi})\textnormal{.}
    \]
    As $\Pi$ is thick, by Proposition \ref{Prop_ThickSpaceLowerBound}, we know that $\wt(\restr{c}{\Pi})\geq\theta_{n-1,q}-\Delta_{n,q}q^{n-2}+1$.
    Combining this with the inequality above, we get
    \[
        y(q-\Delta_{n,q})+\Delta_{n,q}\theta_{n-2,q}\geq\theta_{n-1,q}-\Delta_{n,q}q^{n-2}
    \]
    Suppose, to the contrary, that $y<(q-\sqrt{q})q^{n-3}$.
    Then we get
    \begin{equation*}
       (q-\sqrt{q})q^{n-3}(q-\Delta_{n,q})+\Delta_{n,q}\theta_{n-2,q}>\theta_{n-1,q}-\Delta_{n,q}q^{n-2}
      \end{equation*}
    which is equivalent to
    \begin{equation*}
      0>q^{n-2}\left(\sqrt{q}-\Delta_{n,q}-\frac{\Delta_{n,q}}{\sqrt{q}}\right)+(q-\Delta_{n,q})\theta_{n-3,q}+1,
    \end{equation*}
   a contradiction.
    
    %%%%%%%%%%%%%%%%%%%%%%%%%%%%%%
    % OLD CONSTRUCTIVE REASONING %
    %%%%%%%%%%%%%%%%%%%%%%%%%%%%%%
    
    \iffalse
    
    \begin{align*}
        y&\geq\frac{\theta_{n-1,q}-\Delta_{n,q}q^{n-2}-\Delta_{n,q}\theta_{n-2,q}}{q-\Delta_{n,q}}\\
        &=\frac{(q^n-1)-\Delta_{n,q}q^{n-2}(q-1)-\Delta_{n,q}(q^{n-1}-1)}{(q-\Delta_{n,q})(q-1)}\\
        &=\frac{q^n-2\Delta_{n,q}q^{n-1}+\Delta_{n,q}q^{n-2}+\Delta_{n,q}-1}{q^2-\Delta_{n,q}q-q+\Delta_{n,q}}\\
        &\geq\frac{q^n-q^{n-1}\sqrt{q}+q^{n-2}}{q^2-2q+\frac{1}{2}\sqrt{q}}\\
        &=\left(1-\frac{q\sqrt{q}-2q+\frac{1}{2}q-1}{q^2-2q+\frac{1}{2}\sqrt{q}}\right)q^{n-2}\\
        &\geq\left(1-\frac{1}{\sqrt{q}}\right)q^{n-2}\textnormal{,}
    \end{align*}
    where the latter inequality is true for all prime powers $q$.
    
    \fi
    
    %%%%%%%%%%%%%%%%%%%%%%%%%%%%%%
    %%%%%%%%%%%%%%%%%%%%%%%%%%%%%%
    %%%%%%%%%%%%%%%%%%%%%%%%%%%%%%
    
    \bigskip
    \underline{Claim $5$}: there exist a thick hyperplane $\Pi$ and an $m$-secant $l\nsubseteq\Pi$ to $\supp(c)$ that intersect in a point contained in $\supp(c)$.
    
    \bigskip
    By Claim $2$, there exists a thick hyperplane $\Pi$; let $l$ be an $m$-secant to $\supp(c)$.
    Note that, for any thin plane $\pi$ through $l$, by the induction hypothesis (as $n\geq3$) and Lemma \ref{Lm_ThinSpaceUpperBound} (as $l\subseteq\pi$), $\restr{c}{\pi}$ is a linear combination of precisely $m$ lines of $\pi$.
    
    First, suppose that $l\subseteq\Pi$.
    Then there are precisely $\theta_{n-3,q}$ planes in $\Pi$ through $l$.
    Hence, by Claim $3$, there exist at least $(q-\sqrt{q})q^{n-3}-\theta_{n-3,q}\geq(q-\sqrt{q})q^{n-3}-\frac{1}{4}q^{n-3}\sqrt{q}=(q-\frac{5}{4}\sqrt{q})q^{n-3}>0$ thin planes through $l$ that are not contained in $\Pi$. Let $\pi$ be one of such planes; $\restr{c}{\pi}$ is a linear combination of precisely $m$ lines of $\pi$.
    Hence, through a point of $l$, there exists another $m$-secant to $\supp(c)$ lying in $\pi$, as $\binom{m}{2}\leq m^2\leq\Delta_{n,q}^2\leq\frac{1}{4}q<q$.
    By replacing $l$ with this newly-found $m$-secant, we can assume that $l\nsubseteq\Pi$.
    Define $P:=l\cap\Pi$ and suppose, to the contrary, that $P\notin\supp(c)$.
    
    Note that, for any thin plane $\pi$ through $l$, the line $\pi\cap\Pi$ cannot be one of the lines present in the linear combination $\restr{c}{\pi}$, as the $m$-secant $l$ has to intersect all such lines in a point of $\supp(c)$.
    As a consequence, for each thick line $t$ in $\Pi$ through $P$, the plane $\vspan{t,l}$ has to be thick.
    
    By Claim $4$, there are at least $(q-\sqrt{q})q^{n-3}$ such thick lines in $\Pi$ through $P$.
    Thus, by Proposition \ref{Prop_ThickSpaceLowerBound}, we obtain
    \begin{align*}
        \wt(c)&\geq(q-\sqrt{q})q^{n-3}(\theta_{2,q}-\Delta_{n,q}q+1-m)+m\\
        &\geq(q-\sqrt{q})q^{n-3}\left(q^2-\frac{1}{2}q\sqrt{q}+q-\frac{1}{2}\sqrt{q}+2\right)\\
        &>W(n,q)\textnormal{,}
    \end{align*}
    where the latter inequality holds for all values of $q$, resulting in a contradiction.
    
    \bigskip
    \underline{Claim $6$}: there exists a hyperplane that contains more than $\frac{1}{2}\theta_{n-1,q}$ points, each of which having the same non-zero value under $c$.
    
    \bigskip
    By Claim $5$, there exist a thick hyperplane $\Pi$ and an $m$-secant $l$ to $\supp(c)$ such that their intersection $P:=l\cap\Pi$ is a point of $\supp(c)$.
    By Claim $3$, there are at least $(q-\sqrt{q})q^{n-3}$ thin planes through $l$.
    By Claim $4$, there are at least $(q-\sqrt{q})q^{n-3}$ thick lines in $\Pi$ through $P$.
    As each plane through $l$ intersects $\Pi$ in a line through $P$, and as there are $\theta_{n-2,q}$ planes through $l$ in total, we can conclude that there must be at least
    \[
        2(q-\sqrt{q})q^{n-3}-\theta_{n-2,q}=q^{n-2}-2q^{n-3}\sqrt{q}-\theta_{n-3,q}\geq q^{n-2}-2q^{n-3}\sqrt{q}-\frac{1}{4}q^{n-3}\sqrt{q}=q^{n-2}-\frac{9}{4}q^{n-3}\sqrt{q}
    \]
    thin planes through $l$ that intersect $\Pi$ in a thick line through $P$. 
    
    As $P\in\supp(c)$, $c(P)=\alpha$ for a certain non-zero value $\alpha\in\mathbb{F}_p^*$.
    Moreover, if $\pi$ is a thin plane through $l$, there must be a unique thick line of $\pi$ going through $P$ (or else $\restr{c}{\pi}$ is a linear combination of $>m$ lines).
    By the assumption hypothesis and Lemma \ref{Lm_SmallLargeSecants}, this unique thick line  contains at least $q-\Delta_{n,q}+2$ points having value $\alpha$ under $c$.
    
    As a result, at least
    \begin{equation*}
       \left(q^{n-2}-\frac{9}{4}q^{n-3}\sqrt{q}\right)(q-\Delta_{n,q}+2-1)+1\geq\left(q^{n-2}-\frac{9}{4}q^{n-3}\sqrt{q}\right)\left(q-\frac{1}{2}\sqrt{q}+1\right)>\frac{1}{2}\theta_{n-1,q}
    \end{equation*}
    points of $\Pi$ have the same non-zero value $\alpha$ under $c$ (the latter inequality holds as $q\geq27$).
    
    \bigskip
    \underline{Claim $7$}: if $c$ is a linear combination of at most $\Delta_{n,q}$ different hyperplanes, then $c$ is a linear combination of precisely $\left\lceil\frac{\wt(c)}{\theta_{n-1,q}}\right\rceil$ different hyperplanes.
    
    \bigskip
    Let $c$ be a linear combination of precisely $j\leq\Delta_{n,q}$ different hyperplanes.
    If $j=0$, the claim is trivially true, as $\wt(c)=0$ in this case.
    Hence, we can assume that $j\geq1$.
    We will derive a lower and upper bound on the weight of $c$.
    
    Firstly, as every two different hyperplanes have $\theta_{n-2,q}$ points in common, we can naively state that
    \[
        \wt(c)\geq j\theta_{n-1,q}-\binom{j}{2}\theta_{n-2,q}\textnormal{.}
    \]
    As $\binom{j}{2}\leq\frac{1}{2}j^2$ and $j\leq\Delta_{n,q}\leq\frac{1}{2}\sqrt{q}$, we can deduce from the above inequality that
    \begin{equation}\label{Eq_WeightLowerBound}
        \wt(c)\geq j\theta_{n-1,q}-\frac{1}{8}q\theta_{n-2,q}=j\theta_{n-1,q}-\frac{1}{8}\theta_{n-1,q}+\frac{1}{8}\geq \left(j-\frac{1}{8}\right)\theta_{n-1,q}.
    \end{equation}
    Secondly, as each hyperplane contains $\theta_{n-1,q}$ points, we obtain
    \begin{equation}\label{Eq_WeightUpperBound}
        \wt(c)\leq j\theta_{n-1,q}.
    \end{equation}
    Hence, combining \eqref{Eq_WeightLowerBound} and \eqref{Eq_WeightUpperBound}, we get
    \[
        j=\left\lceil j-\frac{1}{8}\right\rceil\leq\left\lceil\frac{\wt(c)}{\theta_{n-1,q}}\right\rceil\leq j\textnormal{.}
    \]
    
    \bigskip
    \underline{Claim $8$}: Any $c\in\mc_{n-1}(n,q)$ with $\wt(c)\leq W(n,q)$ is a linear combination of at most $\Delta_{n,q}-1$ different hyperplanes.
    
    \bigskip
    Suppose the contrary, and let $c$ be a codeword of minimal weight with the property that $\wt(c)\leq W(n,q)$ and that $c$ cannot be written as a linear combination of at most $\Delta_{n,q}-1$ different hyperplanes.
    By Claim $6$, there exists a hyperplane $\Pi$ that contains more than $\frac{1}{2}\theta_{n-1,q}$ points, all having the same non-zero value $\alpha\in\mathbb{F}_p^*$ under $c$.
    As $f_\Pi$ is a codeword of $\mc_{n-1}(n,q)$, $c-\alpha\cdot f_\Pi$ is a codeword as well, with weight strictly smaller than $\wt(c)$.
    By the minimality of $c$, the codeword $c-\alpha\cdot f_\Pi$ has to be a linear combination of at most $\Delta_{n,q}-1$ different hyperplanes, hence $c$ has to be a linear combination of at most $\Delta_{n,q}-1+1=\Delta_{n,q}$ different hyperplanes.
    By Claim $7$, $c$ is a linear combination of precisely $\left\lceil\frac{\wt(c)}{\theta_{n-1,q}}\right\rceil\leq\left\lceil\frac{W(n,q)}{\theta_{n-1,q}}\right\rceil=\Delta_{n,q}-1$ different hyperplanes, contradicting the assumed properties of $c$.
    
    As a result, Claim $8$ is true.
    Claim $7$ and $8$ prove the theorem.
\end{proof}

\section{Minimal codewords}\label{Sec:minimal}

In this section, we will partially characterise the minimal codewords of $\mc_{n-1}(n,q)$.
To this end, we make use of the same assumptions made at the beginning of Section \ref{Sect_MinWeight}, although this time we can include the case $n=2$ (and make use of Theorem \ref{Res_SzonyiZsuzsa}).
As noted below the statement of Theorem \ref{Thm_MainHyperplane}, the assumptions on $q$ need not be explicitly written down for the theorem to be true; the same holds for the theorems arising in this section.
Hence, we may (silently) assume that $n\geq2$, $q=p^h$ with $h\geq2$ and that

\[
    q\geq\begin{cases}\max\left\{32,2^{2n-4}\right\}\quad&\textnormal{if }h>2\textnormal{,}\\2^{2n}\quad&\textnormal{if }h=2\textnormal{.}\end{cases}
\]

Furthermore, we focus on codewords of $\mc_{n-1}(n,q)$ of weight at most $W(n,q)$ (see Section \ref{Sect_MinWeight}).
In light of this, define
\[
    t_c:=\left\lceil\frac{\wt(c)}{\theta_{n-1,q}}\right\rceil
\]
for each codeword $c\in\mc_{n-1}(n,q)$.

\begin{prop}\label{Prop_SubsetHyperplanes}
    Let $c$ and $c'$ be codewords of $\mc_{n-1}(n,q)$, with $\supp(c')\subseteq\supp(c)$ and $\wt(c)\leq W(n,q)$, and suppose that $c$, respectively $c'$, can be written as a linear combination of hyperplanes belonging to a set $\widetilde{\mathcal{H}}_c$, respectively $\widetilde{\mathcal{H}}_{c'}$, with $|\widetilde{\mathcal{H}}_c|=t_c$ and $|\widetilde{\mathcal{H}}_{c'}|=t_{c'}$.
    Then $\widetilde{\mathcal{H}}_{c'}\subseteq\widetilde{\mathcal{H}}_c$.
    
    As a consequence the following holds.
    \begin{enumerate}
        \item There exists a unique set of $t_c$ hyperplanes such that $c$ can be written as a linear combination of these hyperplanes; we will denote this set of hyperplanes by ${\mathcal{H}_c}$.
        Any codeword $c$ with $\wt(c)\leq W(n,q)$ uniquely determines such a set $\mathcal{H}_c$.
        \item Let $\mathcal{H}_c=\{H_1,\dots,H_{t_c}\}$ be the unique set of hyperplanes such that $c$ can be written as a linear combination of hyperplanes of $\mathcal{H}_c$.
        Then the coefficients in the corresponding linear combination are uniquely determined by $c$.
        If $c=\sum_{i=1}^{t_c}\alpha_iH_i$, then we will write 
        \begin{equation}\label{Eq_ValueOfHyperplane}
            c(H):=\begin{cases}\alpha_i\quad\textnormal{if }H=H_i\textnormal{,}\\0\quad\textnormal{otherwise,}\end{cases}
        \end{equation}
        for any hyperplane $H$ of  $\pg(n,q)$.
    \end{enumerate}
\end{prop}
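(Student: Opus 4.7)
The plan is to establish the inclusion $\mathcal{H}_{c'}\subseteq\mathcal{H}_c$ by contradiction: I would assume there exists a hyperplane $H\in\mathcal{H}_{c'}\setminus\mathcal{H}_c$ and derive a conflict by bounding $\wt(\restr{c}{H})$ from both sides. Note that Theorem \ref{Thm_MainHyperplane} applies to both $c$ and $c'$, since $\wt(c')\leq\wt(c)\leq W(n,q)$, so both representing sets exist and have sizes $m_c,m_{c'}\leq\Delta_{n,q}-1$.

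First I would isolate the \emph{$H$-pure points}, i.e.\ those $P\in H$ lying in no other hyperplane of $\mathcal{H}_{c'}$. Since any two distinct hyperplanes of $\pg(n,q)$ meet in $\theta_{n-2}$ points, a union bound produces at least $\theta_{n-1}-(m_{c'}-1)\theta_{n-2}$ such points. At each $H$-pure point $P$, the evaluation $c'(P)$ equals the non-zero coefficient of $H$ in the expansion of $c'$, so $P\in\supp(c')\subseteq\supp(c)$ and thus contributes to $\wt(\restr{c}{H})$. On the other hand, since $H\notin\mathcal{H}_c$, every $H_i\in\mathcal{H}_c$ meets $H$ in an $(n-2)$-subspace; by Lemma \ref{Lm_SubspaceIsCodeWord} the restriction $\restr{c}{H}$ is a codeword of $\mc_{n-2}(n-1,q)$ that can be written as a linear combination of at most $m_c$ hyperplanes of $H\cong\pg(n-1,q)$, forcing $\wt(\restr{c}{H})\leq m_c\theta_{n-2}$. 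Combining,
\[
    \theta_{n-1}\leq (m_c+m_{c'}-1)\theta_{n-2}\leq (2\Delta_{n,q}-3)\theta_{n-2},
\]
which contradicts $\theta_{n-1}=q\theta_{n-2}+1$ under the standing assumptions, since $2\Delta_{n,q}$ is of order at most $\sqrt{q}/2^{n-3}$ while $q$ is taken much larger.

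For the two consequences, part (1) follows easily: existence of $\mathcal{H}_c$ is Theorem \ref{Thm_MainHyperplane}, and if $c$ admits two representing sets $\mathcal{H}$ and $\mathcal{H}'$ of hyperplanes, then applying the main statement with $c'=c$ (once in each role) yields $\mathcal{H}\subseteq\mathcal{H}'$ and $\mathcal{H}'\subseteq\mathcal{H}$, hence equality. For part (2), write $c=\sum_i\alpha_i f_{H_i}=\sum_i\beta_i f_{H_i}$ with $\alpha_i,\beta_i\neq0$ on the same set $\mathcal{H}_c$; the pure-point principle applied inside $\mathcal{H}_c$ shows that each $H_j$ contains at least $\theta_{n-1}-(m_c-1)\theta_{n-2}>0$ points lying in no other $H_i$, and evaluating both expressions at any such point forces $\alpha_j=\beta_j$.

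The main technical hurdle is just ensuring the pure-point counts are strictly positive under the stated hypotheses, which reduces to the inequality $\theta_{n-1}>(m_c+m_{c'}-1)\theta_{n-2}$; this is exactly where the quantitative bound on $\Delta_{n,q}$ (depending on whether $h=2$ or $h>2$) and the lower bound on $q$ imposed at the start of Section \ref{Sect_MinWeight} are decisive. Beyond this the argument is purely combinatorial once Theorem \ref{Thm_MainHyperplane} supplies the representing sets.
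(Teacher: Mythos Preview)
Your proposal is correct and follows essentially the same approach as the paper: both argue by contradiction from a hypothetical $H\in\mathcal{H}_{c'}\setminus\mathcal{H}_c$, obtain the inequality $\theta_{n-1}\leq(2\Delta_{n,q}-3)\theta_{n-2}$ via a union-bound on how many points of $H$ the remaining hyperplanes can cover, and then deduce the contradiction from $\Delta_{n,q}\leq\sqrt{q}$. Your phrasing (bounding $\wt(\restr{c}{H})$ from above and below) and the paper's (directly exhibiting a point of $H$ missed by every hyperplane of $(\mathcal{H}_c\cup\mathcal{H}_{c'})\setminus\{H\}$) are equivalent reformulations of the same counting step, and your treatment of consequences (1) and (2) via the pure-point principle mirrors the paper's.
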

\begin{proof}
    Note that by Theorem \ref{Res_SzonyiZsuzsa} and Theorem \ref{Thm_MainHyperplane}, both $c$ and $c'$ can be written as a linear combination of exactly $t_c$, respectively $t_{c'}$, different hyperplanes (in other words, the coefficients corresponding to each of these hyperplanes w.r.t.\ $c$, respectively $c'$, are non-zero).
    
    Suppose, to the contrary, that there exists a hyperplane $H\in\widetilde{\mathcal{H}}_{c'}\setminus\widetilde{\mathcal{H}}_c$.
    Then all hyperplanes in $\left(\widetilde{\mathcal{H}}_c\cup\widetilde{\mathcal{H}}_{c'}\right)\setminus\{H\}$ cover at most
    \[
        \left(2t_c-1\right)\theta_{n-2,q}\leq\left(2\left\lceil\frac{W(n,q)}{\theta_{n-1,q}}\right\rceil-1\right)\theta_{n-2,q}\leq(2\Delta_{n,q}-3)\theta_{n-2,q}\leq(2\sqrt{q}-3)\theta_{n-2,q}<\theta_{n-1,q}
    \]
    points of $H$.
    As a consequence, there exists a point $P\in H$ which is not contained in any hyperplane of $\left(\widetilde{\mathcal{H}}_c\cup\widetilde{\mathcal{H}}_{c'}\right)\setminus\{H\}$.
    As $P$ is not contained in any hyperplane of $\widetilde{\mathcal{H}}_c$, $P\notin\supp(c)$.
    However, as $H\in\widetilde{\mathcal{H}}_{c'}$ and as $P$ is contained in no other hyperplane of $\widetilde{\mathcal{H}}_{c'}$ except for $H$, $P\in\supp(c')\subseteq\supp(c)$, a contradiction.
    
    Statement $1.$ follows immediately by defining $c':=c$.
    Statement $2.$ follows by repeating the above arguments for the unique set $\mathcal{H}_c$: we observe that each hyperplane of $\mathcal{H}_c$ contains a point that is not contained in any other hyperplane of $\mathcal{H}_c$, hence its coefficient w.r.t.\ $c$ is uniquely determined.
\end{proof}

\begin{df}\label{Def_RestrictedHyperplaneCodeword}
    Let $c$ be a codeword of $\mc_{n-1}(n,q)$ with $\wt(c)\leq W(n,q)$ and consider its set of hyperplanes $\mathcal{H}_c$.
    Suppose $\mathcal{H}\subseteq\mathcal{H}_c$.
    Keeping the extended definition \eqref{Eq_ValueOfHyperplane} of $c$ in mind, we can define\footnote{This shouldn't interfere with the definition of a \emph{restricted codeword} $\restr{c}{\iota}$, where $\iota$ is an $i$-subspace.}
    \[
        \restr{c}{\mathcal{H}}:=\sum_{H\in\mathcal{H}}c(H)H\textnormal{.}
     \]
    In particular, we can state the trivial expression $\restr{c}{\mathcal{H}_c}=c$.
\end{df}

\begin{df}\label{Def_Minimal}
    Let $c\in\mc_{n-1}(n,q)$.
    Then $c$ is \emph{minimal} if for each $c'\in\mc_{n-1}(n,q)$ with $\supp(c')\subseteq\supp(c)$, there exists an $\alpha\in\mathbb{F}_p$ such that $c'=\alpha c$.
\end{df}

\begin{df}\label{Def_GraphAdjacency}
    Let $c\in\mc_{n-1}(n,q)$ with $\wt(c)\leq W(n,q)$ and suppose $\mathbb{H}_c$ is a partition of $\mathcal{H}_c$.
    Consider the graph $\Gamma_{\mathbb{H}_c}$ with vertex set $\mathbb{H}_c$, where two vertices $\mathcal{V}_1$ and $\mathcal{V}_2$ are adjacent if and only if there exists a point $P\in\pg(n,q)$ such that
    \begin{enumerate}
        \item $P$ is a hole of $c$,
        \item $P$ belongs to the support of both $\restr{c}{\mathcal{V}_1}$ and $\restr{c}{\mathcal{V}_2}$, and
        \item $P$ is a hole of $\restr{c}{\mathcal{V}}$ for any $\mathcal{V}\in\mathbb{H}_c\setminus\{\mathcal{V}_1,\mathcal{V}_2\}$.
    \end{enumerate}
\end{df}

\begin{constr}\label{Constr_HyperplaneGraphPartition}
    Let $c\in\mc_{n-1}(n,q)$ with $\wt(c)\leq W(n,q)$ and suppose $\mathbb{H}_c^0:=\binom{\mathcal{H}_c}{1}$ is the set of singletons, each containing a unique hyperplane in $\mathcal{H}_c$.
    For each $i\in\NN$, we recursively define $\mathbb{H}_c^{i+1}$ in the following way:
    \[
        \mathbb{H}_c^{i+1}:=\left\{\bigcup_{\mathcal{V}\in\mathbb{V}}\mathcal{V}:\mathbb{V}\textnormal{ is the vertex set of a connected component in }\Gamma_{\mathbb{H}_c^i}\right\}\textnormal{.}
    \]
    Note that this set is yet again a partition of $\mathcal{H}_c$, and that $\mathbb{H}_c^i$ is a refinement of $\mathbb{H}_c^{i+1}$ for all $i\in\NN$.
    Hence, as $\mathcal{H}_c$ is finite, there exists a $j\in\NN$ such that $\mathbb{H}_c^j=\mathbb{H}_c^{j+1}=\mathbb{H}_c^{j+2}=\dots$; denote the latter set by $\mathbb{H}_c^\infty$.
\end{constr}

\begin{figure}
    \begin{center}\begin{tikzpicture}
        % line a_0
        \draw[thin, line join=round, line cap=round, name path=a0] (270+40:1) -- (90+40:4.5);
        \node[draw=none, fill=none, anchor=east] at (90+40:4.5) {$a_0$};
        \node[draw=none, fill=none, anchor=west] at (90+40:4.5) {$\boldsymbol{\alpha}$};
        
        % line a tilde
        \draw[thin, line join=round, line cap=round] (270+15:0.8) -- (90+15:3.5);
        \node[draw=none, fill=none, anchor=east] at (90+15:3.5) {$\widetilde{a}$};
        \node[draw=none, fill=none, anchor=west] at (90+15:3.5) {$\boldsymbol{2\alpha}$};
        
        % lines a1, a2 and point A
        \draw[thin, line join=round, line cap=round, name path=a1] (90+15:2.5) -- +(180-8:2);
        \draw[thin, line join=round, line cap=round] (90+15:2.5) -- +(-8:3);
        \draw[thin, line join=round, line cap=round, name path=a2] (90+15:2.5) -- +(180+8:1.5);
        \draw[thin, line join=round, line cap=round] (90+15:2.5) -- +(8:3.5);
        \draw[fill=white] (90+15:2.5) circle (2.5pt);
        \node[draw=none, fill=none, anchor=east] at ($(90+15:2.5)+(180-8:2)$) {$a_1$};
        \node[draw=none, fill=none, anchor=east] at ($(90+15:2.5)+(180+8:1.5)$) {$a_2$};
        \node[draw=none, fill=none, anchor=west] at ($(90+15:2.5)+(-8:3)$) {$\boldsymbol{-\alpha}$};
        \node[draw=none, fill=none, anchor=west] at ($(90+15:2.5)+(8:3.5)$) {$\boldsymbol{-\alpha}$};
        
        % intersection holes
        \draw [fill=white, name intersections={of=a0 and a1}] (intersection-1) +(-2pt,-2pt) rectangle +(2pt,2pt);
        \draw [fill=white, name intersections={of=a0 and a2}] (intersection-1) +(-2pt,-2pt) rectangle +(2pt,2pt);
        
        % line b_0
        \draw[thin, line join=round, line cap=round, name path=b0] (270-40:1) -- (90-40:4.5);
        \node[draw=none, fill=none, anchor=east] at (90-40:4.5) {$b_0$};
        \node[draw=none, fill=none, anchor=west] at (90-40:4.5) {$\boldsymbol{\beta}$};
        
        % line b tilde
        \draw[thin, line join=round, line cap=round] (270-15:0.8) -- (90-15:3.5);
        \node[draw=none, fill=none, anchor=east] at (90-15:3.5) {$\widetilde{b}$};
        \node[draw=none, fill=none, anchor=west] at (90-15:3.5) {$\boldsymbol{3\beta}$};
        
        % lines b1, b2, b3 and point B
        \draw[thin, line join=round, line cap=round] (90-15:1) -- +(180-20:2);
        \draw[thin, line join=round, line cap=round, name path=b1] (90-15:1) -- +(-20:1);
        \draw[thin, line join=round, line cap=round] (90-15:1) -- +(180:2);
        \draw[thin, line join=round, line cap=round, name path=b2] (90-15:1) -- +(0:1.5);
        \draw[thin, line join=round, line cap=round] (90-15:1) -- +(180+20:1.5);
        \draw[thin, line join=round, line cap=round, name path=b3] (90-15:1) -- +(20:1.5);
        \draw[fill=white] (90-15:1) circle (2.5pt);
        \node[draw=none, fill=none, anchor=east] at ($(90-15:1)+(180-20:2)$) {$b_1$};
        \node[draw=none, fill=none, anchor=east] at ($(90-15:1)+(180:2)$) {$b_2$};
        \node[draw=none, fill=none, anchor=east] at ($(90-15:1)+(180+20:1.5)$) {$b_3$};
        \node[draw=none, fill=none, anchor=west] at ($(90-15:1)+(-20:1)$) {$\boldsymbol{-\beta}$};
        \node[draw=none, fill=none, anchor=west] at ($(90-15:1)+(0:1.5)$) {$\boldsymbol{-\beta}$};
        \node[draw=none, fill=none, anchor=west] at ($(90-15:1)+(20:1.5)$) {$\boldsymbol{-\beta}$};
        
        % intersection holes
        \draw [fill=white, name intersections={of=b0 and b1}] (intersection-1) +(-2pt,-2pt) rectangle +(2pt,2pt);
        \draw [fill=white, name intersections={of=b0 and b2}] (intersection-1) +(-2pt,-2pt) rectangle +(2pt,2pt);
        \draw [fill=white, name intersections={of=b0 and b3}] (intersection-1) +(-2pt,-2pt) rectangle +(2pt,2pt);
        
        % Central point
        \draw[fill=white] (0,0) circle (3pt);
    \end{tikzpicture}
    \quad
    \begin{tikzpicture}
        % line a_0
        \draw[very thick, line join=round, line cap=round, name path=a0] (270+40:1) -- (90+40:4.5);
        \node[draw=none, fill=none, anchor=east] at (90+40:4.5) {$a_0$};
        \node[draw=none, fill=none, anchor=west] at (90+40:4.5) {$\boldsymbol{\alpha}$};
        
        % line a tilde
        \draw[thin, line join=round, line cap=round] (270+15:0.8) -- (90+15:3.5);
        \node[draw=none, fill=none, anchor=east] at (90+15:3.5) {$\widetilde{a}$};
        \node[draw=none, fill=none, anchor=west] at (90+15:3.5) {$\boldsymbol{2\alpha}$};
        
        % lines a1, a2 and point A
        \draw[very thick, line join=round, line cap=round, name path=a1] (90+15:2.5) -- +(180-8:2);
        \draw[very thick, line join=round, line cap=round] (90+15:2.5) -- +(-8:3);
        \draw[very thick, line join=round, line cap=round, name path=a2] (90+15:2.5) -- +(180+8:1.5);
        \draw[very thick, line join=round, line cap=round] (90+15:2.5) -- +(8:3.5);
        \draw[fill=white] (90+15:2.5) +(-2.5pt,-2.5pt) rectangle +(2.5pt,2.5pt);
        \node[draw=none, fill=none, anchor=south west] at (90+15:2.5) {$A$};
        \node[draw=none, fill=none, anchor=east] at ($(90+15:2.5)+(180-8:2)$) {$a_1$};
        \node[draw=none, fill=none, anchor=east] at ($(90+15:2.5)+(180+8:1.5)$) {$a_2$};
        \node[draw=none, fill=none, anchor=west] at ($(90+15:2.5)+(-8:3)$) {$\boldsymbol{-\alpha}$};
        \node[draw=none, fill=none, anchor=west] at ($(90+15:2.5)+(8:3.5)$) {$\boldsymbol{-\alpha}$};
        
        % intersection holes
        \draw [very thick, fill=white, name intersections={of=a0 and a1}] (intersection-1) circle (2pt);
        \draw [very thick, fill=white, name intersections={of=a0 and a2}] (intersection-1) circle (2pt);
        
        % line b_0
        \draw[very thick, dashed, line join=round, line cap=round, name path=b0] (270-40:1) -- (90-40:4.5);
        \node[draw=none, fill=none, anchor=east] at (90-40:4.5) {$b_0$};
        \node[draw=none, fill=none, anchor=west] at (90-40:4.5) {$\boldsymbol{\beta}$};
        
        % line b tilde
        \draw[thin, line join=round, line cap=round] (270-15:0.8) -- (90-15:3.5);
        \node[draw=none, fill=none, anchor=east] at (90-15:3.5) {$\widetilde{b}$};
        \node[draw=none, fill=none, anchor=west] at (90-15:3.5) {$\boldsymbol{3\beta}$};
        
        % lines b1, b2, b3 and point B
        \draw[very thick, dashed, line join=round, line cap=round] (90-15:1) -- +(180-20:2);
        \draw[very thick, dashed, line join=round, line cap=round, name path=b1] (90-15:1) -- +(-20:1);
        \draw[very thick, dashed, line join=round, line cap=round] (90-15:1) -- +(180:2);
        \draw[very thick, dashed, line join=round, line cap=round, name path=b2] (90-15:1) -- +(0:1.5);
        \draw[very thick, dashed, line join=round, line cap=round] (90-15:1) -- +(180+20:1.5);
        \draw[very thick, dashed, line join=round, line cap=round, name path=b3] (90-15:1) -- +(20:1.5);
        \draw[fill=white] (90-15:1) +(-2.5pt,-2.5pt) rectangle +(2.5pt,2.5pt);
        \node[draw=none, fill=none, anchor=east] at ($(90-15:1)+(180-20:2)$) {$b_1$};
        \node[draw=none, fill=none, anchor=east] at ($(90-15:1)+(180:2)$) {$b_2$};
        \node[draw=none, fill=none, anchor=east] at ($(90-15:1)+(180+20:1.5)$) {$b_3$};
        \node[draw=none, fill=none, anchor=west] at ($(90-15:1)+(-20:1)$) {$\boldsymbol{-\beta}$};
        \node[draw=none, fill=none, anchor=west] at ($(90-15:1)+(0:1.5)$) {$\boldsymbol{-\beta}$};
        \node[draw=none, fill=none, anchor=west] at ($(90-15:1)+(20:1.5)$) {$\boldsymbol{-\beta}$};
        
        % intersection holes
        \draw [very thick, fill=white, name intersections={of=b0 and b1}] (intersection-1) circle (2pt);
        \draw [very thick, fill=white, name intersections={of=b0 and b2}] (intersection-1) circle (2pt);
        \draw [very thick, fill=white, name intersections={of=b0 and b3}] (intersection-1) circle (2pt);
        
        % Central point
        \draw[fill=white] (0,0) circle (3pt);
    \end{tikzpicture}
    \begin{tikzpicture}
        \clip (-8,-0.5) rectangle (8,0.5);
        \node[draw,very thick,fill=none,anchor=west] at (-1.5,0) {$\boldsymbol{3\alpha+4\beta=0}$};
    \end{tikzpicture}
    \begin{tikzpicture}
        % line a_0
        \draw[very thick, line join=round, line cap=round, name path=a0] (270+40:1) -- (90+40:4.5);
        \node[draw=none, fill=none, anchor=east] at (90+40:4.5) {$a_0$};
        \node[draw=none, fill=none, anchor=west] at (90+40:4.5) {$\boldsymbol{\alpha}$};
        
        % line a tilde
        \draw[very thick, line join=round, line cap=round] (270+15:0.8) -- (90+15:3.5);
        \node[draw=none, fill=none, anchor=east] at (90+15:3.5) {$\widetilde{a}$};
        \node[draw=none, fill=none, anchor=west] at (90+15:3.5) {$\boldsymbol{2\alpha}$};
        
        % lines a1, a2 and point A
        \draw[very thick, line join=round, line cap=round, name path=a1] (90+15:2.5) -- +(180-8:2);
        \draw[very thick, line join=round, line cap=round] (90+15:2.5) -- +(-8:3);
        \draw[very thick, line join=round, line cap=round, name path=a2] (90+15:2.5) -- +(180+8:1.5);
        \draw[very thick, line join=round, line cap=round] (90+15:2.5) -- +(8:3.5);
        \draw[very thick, fill=white] (90+15:2.5) circle (2.5pt);
        \node[draw=none, fill=none, anchor=east] at ($(90+15:2.5)+(180-8:2)$) {$a_1$};
        \node[draw=none, fill=none, anchor=east] at ($(90+15:2.5)+(180+8:1.5)$) {$a_2$};
        \node[draw=none, fill=none, anchor=west] at ($(90+15:2.5)+(-8:3)$) {$\boldsymbol{-\alpha}$};
        \node[draw=none, fill=none, anchor=west] at ($(90+15:2.5)+(8:3.5)$) {$\boldsymbol{-\alpha}$};
        
        % intersection holes
        \draw [very thick, fill=white, name intersections={of=a0 and a1}] (intersection-1) circle (2pt);
        \draw [very thick, fill=white, name intersections={of=a0 and a2}] (intersection-1) circle (2pt);
        
        % line b_0
        \draw[very thick, dashed, line join=round, line cap=round, name path=b0] (270-40:1) -- (90-40:4.5);
        \node[draw=none, fill=none, anchor=east] at (90-40:4.5) {$b_0$};
        \node[draw=none, fill=none, anchor=west] at (90-40:4.5) {$\boldsymbol{\beta}$};
        
        % line b tilde
        \draw[very thick, dashed, line join=round, line cap=round] (270-15:0.8) -- (90-15:3.5);
        \node[draw=none, fill=none, anchor=east] at (90-15:3.5) {$\widetilde{b}$};
        \node[draw=none, fill=none, anchor=west] at (90-15:3.5) {$\boldsymbol{3\beta}$};
        
        % lines b1, b2, b3 and point B
        \draw[very thick, dashed, line join=round, line cap=round] (90-15:1) -- +(180-20:2);
        \draw[very thick, dashed, line join=round, line cap=round, name path=b1] (90-15:1) -- +(-20:1);
        \draw[very thick, dashed, line join=round, line cap=round] (90-15:1) -- +(180:2);
        \draw[very thick, dashed, line join=round, line cap=round, name path=b2] (90-15:1) -- +(0:1.5);
        \draw[very thick, dashed, line join=round, line cap=round] (90-15:1) -- +(180+20:1.5);
        \draw[very thick, dashed, line join=round, line cap=round, name path=b3] (90-15:1) -- +(20:1.5);
        \draw[very thick, fill=white] (90-15:1) circle (2.5pt);
        \node[draw=none, fill=none, anchor=east] at ($(90-15:1)+(180-20:2)$) {$b_1$};
        \node[draw=none, fill=none, anchor=east] at ($(90-15:1)+(180:2)$) {$b_2$};
        \node[draw=none, fill=none, anchor=east] at ($(90-15:1)+(180+20:1.5)$) {$b_3$};
        \node[draw=none, fill=none, anchor=west] at ($(90-15:1)+(-20:1)$) {$\boldsymbol{-\beta}$};
        \node[draw=none, fill=none, anchor=west] at ($(90-15:1)+(0:1.5)$) {$\boldsymbol{-\beta}$};
        \node[draw=none, fill=none, anchor=west] at ($(90-15:1)+(20:1.5)$) {$\boldsymbol{-\beta}$};
        
        % intersection holes
        \draw [very thick, fill=white, name intersections={of=b0 and b1}] (intersection-1) circle (2pt);
        \draw [very thick, fill=white, name intersections={of=b0 and b2}] (intersection-1) circle (2pt);
        \draw [very thick, fill=white, name intersections={of=b0 and b3}] (intersection-1) circle (2pt);
        
        % Central point
        \draw[fill=white] (0,0) +(-3pt,-3pt) rectangle +(3pt,3pt);
    \end{tikzpicture}
    \quad
    \begin{tikzpicture}
        % line a_0
        \draw[very thick, line join=round, line cap=round, name path=a0] (270+40:1) -- (90+40:4.5);
        \node[draw=none, fill=none, anchor=east] at (90+40:4.5) {$a_0$};
        \node[draw=none, fill=none, anchor=west] at (90+40:4.5) {$\boldsymbol{\alpha}$};
        
        % line a tilde
        \draw[very thick, line join=round, line cap=round] (270+15:0.8) -- (90+15:3.5);
        \node[draw=none, fill=none, anchor=east] at (90+15:3.5) {$\widetilde{a}$};
        \node[draw=none, fill=none, anchor=west] at (90+15:3.5) {$\boldsymbol{2\alpha}$};
        
        % lines a1, a2 and point A
        \draw[very thick, line join=round, line cap=round, name path=a1] (90+15:2.5) -- +(180-8:2);
        \draw[very thick, line join=round, line cap=round] (90+15:2.5) -- +(-8:3);
        \draw[very thick, line join=round, line cap=round, name path=a2] (90+15:2.5) -- +(180+8:1.5);
        \draw[very thick, line join=round, line cap=round] (90+15:2.5) -- +(8:3.5);
        \draw[very thick, fill=white] (90+15:2.5) circle (2.5pt);
        \node[draw=none, fill=none, anchor=east] at ($(90+15:2.5)+(180-8:2)$) {$a_1$};
        \node[draw=none, fill=none, anchor=east] at ($(90+15:2.5)+(180+8:1.5)$) {$a_2$};
        \node[draw=none, fill=none, anchor=west] at ($(90+15:2.5)+(-8:3)$) {$\boldsymbol{-\alpha}$};
        \node[draw=none, fill=none, anchor=west] at ($(90+15:2.5)+(8:3.5)$) {$\boldsymbol{-\alpha}$};
        
        % intersection holes
        \draw [very thick, fill=white, name intersections={of=a0 and a1}] (intersection-1) circle (2pt);
        \draw [very thick, fill=white, name intersections={of=a0 and a2}] (intersection-1) circle (2pt);
        
        % line b_0
        \draw[very thick, line join=round, line cap=round, name path=b0] (270-40:1) -- (90-40:4.5);
        \node[draw=none, fill=none, anchor=east] at (90-40:4.5) {$b_0$};
        \node[draw=none, fill=none, anchor=west] at (90-40:4.5) {$\boldsymbol{\beta}$};
        
        % line b tilde
        \draw[very thick, line join=round, line cap=round] (270-15:0.8) -- (90-15:3.5);
        \node[draw=none, fill=none, anchor=east] at (90-15:3.5) {$\widetilde{b}$};
        \node[draw=none, fill=none, anchor=west] at (90-15:3.5) {$\boldsymbol{3\beta}$};
        
        % lines b1, b2, b3 and point B
        \draw[very thick, line join=round, line cap=round] (90-15:1) -- +(180-20:2);
        \draw[very thick, line join=round, line cap=round, name path=b1] (90-15:1) -- +(-20:1);
        \draw[very thick, line join=round, line cap=round] (90-15:1) -- +(180:2);
        \draw[very thick, line join=round, line cap=round, name path=b2] (90-15:1) -- +(0:1.5);
        \draw[very thick, line join=round, line cap=round] (90-15:1) -- +(180+20:1.5);
        \draw[very thick, line join=round, line cap=round, name path=b3] (90-15:1) -- +(20:1.5);
        \draw[very thick, fill=white] (90-15:1) circle (2.5pt);
        \node[draw=none, fill=none, anchor=east] at ($(90-15:1)+(180-20:2)$) {$b_1$};
        \node[draw=none, fill=none, anchor=east] at ($(90-15:1)+(180:2)$) {$b_2$};
        \node[draw=none, fill=none, anchor=east] at ($(90-15:1)+(180+20:1.5)$) {$b_3$};
        \node[draw=none, fill=none, anchor=west] at ($(90-15:1)+(-20:1)$) {$\boldsymbol{-\beta}$};
        \node[draw=none, fill=none, anchor=west] at ($(90-15:1)+(0:1.5)$) {$\boldsymbol{-\beta}$};
        \node[draw=none, fill=none, anchor=west] at ($(90-15:1)+(20:1.5)$) {$\boldsymbol{-\beta}$};
        
        % intersection holes
        \draw [very thick, fill=white, name intersections={of=b0 and b1}] (intersection-1) circle (2pt);
        \draw [very thick, fill=white, name intersections={of=b0 and b2}] (intersection-1) circle (2pt);
        \draw [very thick, fill=white, name intersections={of=b0 and b3}] (intersection-1) circle (2pt);
        
        % Central point
        \draw[very thick, fill=white] (0,0) circle (3pt);
    \end{tikzpicture}\end{center}
    \caption[caption]{The application of Construction \ref{Constr_HyperplaneGraphPartition} to an example codeword $c\in\mc_1(2,q)$ of weight $9q-12$.
    More specifically, we consider nine lines of $\pg(2,q)$ and define the codeword $c:=\alpha(a_0-a_1-a_2)+2\alpha\,\widetilde{a}+\beta(b_0-b_1-b_2-b_3)+3\beta\,\widetilde{b}$.
    For this specific example, we assume $q$ not prime, $q\geq529$ if $h=2$ (to be able to apply Theorem \ref{Res_SzonyiZsuzsa}) and $q\geq125$, $p\notin\{2,3,7,11,13\}$, if $h>2$.
    Furthermore, $\alpha,\beta\in\mathbb{F}_q^*$ are two non-zero elements such that $3\alpha+4\beta=0$.\\\hspace{\textwidth}
    Lines are clustered in four `stages', each of which consists of `clustering' the lines by following the rule of thumb described in Construction \ref{Constr_HyperplaneGraphPartition}.
    Holes that are about to `merge' clusters are indicated by squares instead of circles.
    In the first stage (top left), every line forms its own cluster.
    In the second stage (top right), the solid bold lines form one cluster, as well as the dashed bold lines; the remaining lines $\widetilde{a}$ and $\widetilde{b}$  form two clusters on their own.
    In the third stage (bottom left), the line $\widetilde{a}$ gets merged into the solid bold cluster and the line $\widetilde{b}$ gets merged into the dashed bold cluster.
    Finally, in the last stage (bottom right), both clusters get merged into one.\\\hspace{\textwidth}
    To explain more clearly how this merging process works, consider the point $A$ in the second stage.
    At this stage,  $A$ belongs to both the support of the solid bold line cluster $\{a_0,a_1,a_2\}$ (with non-zero value $-2\alpha$) and the support of the cluster $\{\widetilde{a}\}$ (with non-zero value $2\alpha$), and thus meets Property $2.$ of Definition \ref{Def_GraphAdjacency}.
    Moreover,  $A$ is a hole of $c$, as well as a hole of $\restr{c}{\mathcal{V}}$ for every other cluster (and therefore fulfils Property $1.$ and $3.$ of Definition \ref{Def_GraphAdjacency}).
    Hence, the clusters $\{a_0,a_1,a_2\}$ and $\{\widetilde{a}\}$ are \emph{adjacent} and thus will get merged in the next stage as Construction \ref{Constr_HyperplaneGraphPartition} prescribes.}
    \label{Fig_Example}
\end{figure}
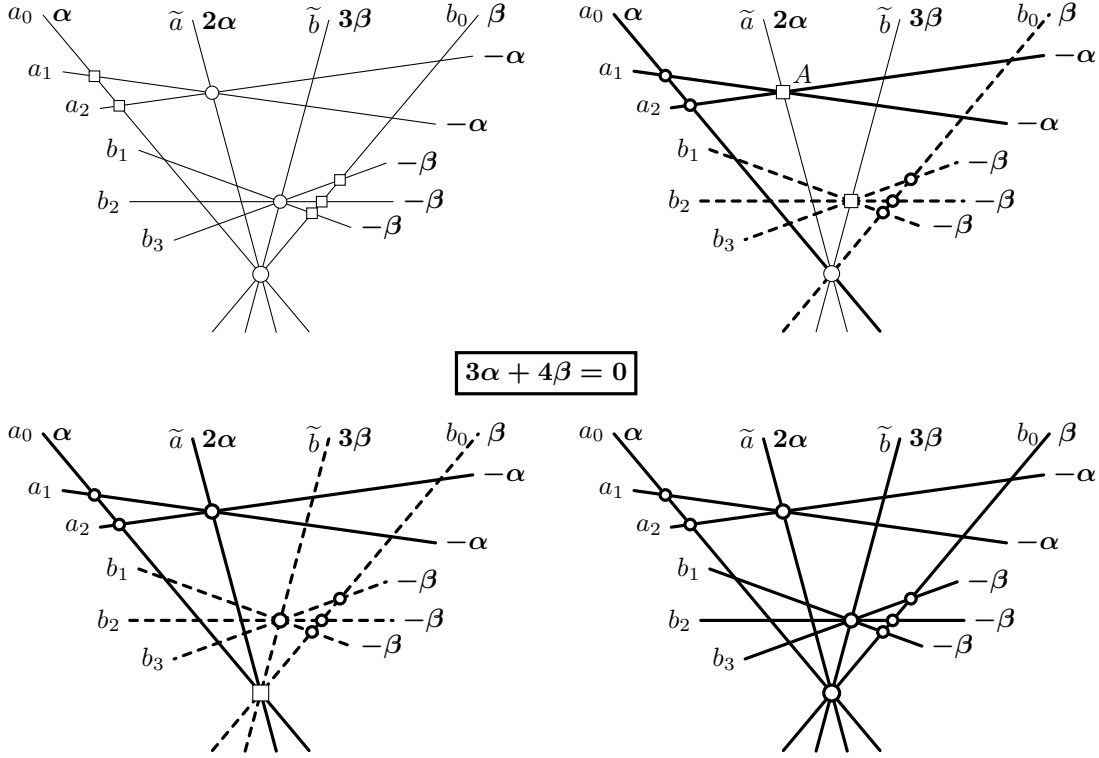

Figure \ref{Fig_Example} is an illustration of the way Construction \ref{Constr_HyperplaneGraphPartition} deals with a specific small weight codeword $c\in\mc_1(2,q)$, $q$ large enough.
The drawing consists of four `stages', and one can check that
\begin{enumerate}
    \item $\mathbb{H}_c^0=\{\{a_0\},\{a_1\},\{a_2\},\{\widetilde{a}\},\{b_0\},\{b_1\},\{b_2\},\{b_3\},\{\widetilde{b}\}\}$,
    \item $\mathbb{H}_c^1=\{\{a_0,a_1,a_2\},\{\widetilde{a}\},\{b_0,b_1,b_2,b_3\},\{\widetilde{b}\}\}$,
    \item $\mathbb{H}_c^2=\{\{a_0,a_1,a_2,\widetilde{a}\},\{b_0,b_1,b_2,b_3,\widetilde{b}\}\}$, and
    \item $\mathbb{H}_c^3=\{\{a_0,a_1,a_2,\widetilde{a},b_0,b_1,b_2,b_3,\widetilde{b}\}\}=\mathbb{H}_c^\infty$.
\end{enumerate}

Hence, for this specific codeword $c$, we end up with $|\mathbb{H}_c^\infty|=1$, a property which, by the following theorem, implies that $c$ is a minimal codeword of $\mc_1(2,q)$.

\begin{thm}
    Let $c\in\mc_{n-1}(n,q)$ with $\wt(c)\leq W(n,q)$ and suppose $|\mathbb{H}_c^{\infty}|=1$.
    Then $c$ is minimal.
\end{thm}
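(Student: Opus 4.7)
The plan is to take an arbitrary codeword $c' \in \mc_{n-1}(n,q)$ with $\supp(c') \subseteq \supp(c)$ and exhibit a single scalar $\alpha \in \mathbb{F}_p$ such that $c' = \alpha c$. Since $\wt(c') \leq \wt(c) \leq W(n,q)$, Theorem \ref{Thm_MainHyperplane} applies to $c'$; combined with Proposition \ref{Prop_SubsetHyperplanes}, this forces $\mathcal{H}_{c'} \subseteq \mathcal{H}_c$. Extending the notation of \eqref{Eq_ValueOfHyperplane} to $c'$ (setting $c'(H) := 0$ for $H \in \mathcal{H}_c \setminus \mathcal{H}_{c'}$), one can then define, for every $H \in \mathcal{H}_c$, the scalar
\[
\alpha_H := c'(H) \cdot c(H)^{-1} \in \mathbb{F}_p,
\]
which is well defined because $c(H) \neq 0$. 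The theorem reduces to showing that $H \mapsto \alpha_H$ is constant on $\mathcal{H}_c$.

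To this end I would proceed by induction on $i$ and establish the following statement: for every block $\mathcal{H} \in \mathbb{H}_c^i$, the value $\alpha_H$ is the same for all $H \in \mathcal{H}$; denote this common value by $\alpha_\mathcal{H}$. The base case $i = 0$ is immediate because the blocks of $\mathbb{H}_c^0$ are singletons. For the inductive step, it suffices to prove that if two blocks $\mathcal{H}_1, \mathcal{H}_2 \in \mathbb{H}_c^i$ are adjacent in $\Gamma_{\mathbb{H}_c^i}$, then $\alpha_{\mathcal{H}_1} = \alpha_{\mathcal{H}_2}$. Definition \ref{Def_GraphAdjacency} supplies a point $P$ that is a hole of $c$, a real point of both $\restr{c}{\mathcal{H}_1}$ and $\restr{c}{\mathcal{H}_2}$, and a hole of $\restr{c}{\mathcal{H}}$ for every other block $\mathcal{H} \in \mathbb{H}_c^i$. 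Writing $s_j := (\restr{c}{\mathcal{H}_j})(P) \in \mathbb{F}_p^*$ for $j \in \{1,2\}$ and using $c(P) = 0$, partitioning the sum by blocks yields $s_1 + s_2 = 0$. Since $\supp(c') \subseteq \supp(c)$ also forces $c'(P) = 0$, the induction hypothesis rewrites this as $\alpha_{\mathcal{H}_1} s_1 + \alpha_{\mathcal{H}_2} s_2 = 0$. Combined with $s_2 = -s_1 \neq 0$, this gives exactly $\alpha_{\mathcal{H}_1} = \alpha_{\mathcal{H}_2}$, and constancy therefore propagates to $\mathbb{H}_c^{i+1}$.

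Since $\mathcal{H}_c$ is finite, the chain of refinements stabilises at $\mathbb{H}_c^\infty$. The hypothesis $|\mathbb{H}_c^\infty| = 1$ then forces all $\alpha_H$ to coincide with a single value $\alpha \in \mathbb{F}_p$. By Definition \ref{Def_RestrictedHyperplaneCodeword}, this yields
\[
c' \;=\; \sum_{H \in \mathcal{H}_c} c'(H)\, H \;=\; \alpha \sum_{H \in \mathcal{H}_c} c(H)\, H \;=\; \alpha c,
\]
which proves minimality. I anticipate no serious obstacle: the argument is essentially bookkeeping powered by the graph construction. The one subtle point, built into requirement $2$ of Definition \ref{Def_GraphAdjacency}, is that the adjacency witness $P$ contributes a genuinely non-zero partial sum from each side, which is what allows the linear relation at $P$ to be cancelled into the scalar equality $\alpha_{\mathcal{H}_1} = \alpha_{\mathcal{H}_2}$.
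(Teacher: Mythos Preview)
Your proposal is correct and follows essentially the same approach as the paper: both argue by induction on $i$ that the scalar relating $c'$ to $c$ is constant on each block of $\mathbb{H}_c^i$, using the adjacency witness $P$ from Definition~\ref{Def_GraphAdjacency} to force equality of the scalars on adjacent blocks via the two relations $\restr{c}{\mathcal{H}_1}(P)+\restr{c}{\mathcal{H}_2}(P)=0$ and $\restr{c'}{\mathcal{H}_1}(P)+\restr{c'}{\mathcal{H}_2}(P)=0$. The only cosmetic difference is that you track the per-hyperplane scalars $\alpha_H$ directly, whereas the paper phrases the induction hypothesis as $\restr{c'}{\mathcal{H}}=\alpha_i^{\mathcal{H}}\restr{c}{\mathcal{H}}$; these are equivalent.
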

\begin{proof}
    Consider an arbitrary codeword $c'\in\mc_{n-1}(n,q)$ for which $\supp(c')\subseteq\supp(c)$.
    We want to prove that there exists an $\alpha\in\mathbb{F}_p$ such that $c'=\alpha c$.
    Keeping Definition \ref{Def_RestrictedHyperplaneCodeword} in mind, this will be done by proving that
    \begin{equation}\label{Eq_ToProveMinimal}
        \forall i\in\NN, \quad \forall\mathcal{V}\in\mathbb{H}_c^i, \quad  \exists\alpha_i^\mathcal{V}\in\mathbb{F}_p \ : \ \restr{c'}{\mathcal{V}}=\alpha_i^\mathcal{V}\restr{c}{\mathcal{V}}.
    \end{equation}
    Indeed, if \eqref{Eq_ToProveMinimal} is true, then it is true for $i=\infty$ (read: for $i$ large enough).
    As $\mathbb{H}_c^\infty$ only contains the element $\mathcal{H}_c$, and as $\mathcal{H}_{c'}\subseteq\mathcal{H}_c$ (Proposition \ref{Prop_SubsetHyperplanes}), this implies that there exists an $\alpha:=\alpha_\infty^{\mathcal{H}_c}\in\mathbb{F}_p$ such that
    \[
        c'=\restr{c'}{\mathcal{H}_{c'}}=\restr{c'}{\mathcal{H}_c}=\alpha\restr{c}{\mathcal{H}_c}=\alpha c\textnormal{.}
    \]
    We will prove \eqref{Eq_ToProveMinimal} by induction on $i$.
    In case $i=0$, the set $\mathbb{H}_c^0=\binom{\mathcal{H}_c}{1}$ partitions $\mathcal{H}_c$ in singletons.
    Hence, for an arbitrary element $\mathcal{V}\in\mathbb{H}_c^0$, there exists a hyperplane $H\in\mathcal{H}_c$ such that $\mathcal{V}=\{H\}$.
    This means that $\restr{c'}{\mathcal{V}}=c'(H)H$ and $\restr{c}{\mathcal{V}}=c(H)H$.
    As $H\in\mathcal{H}_c$ implies that $c(H)\neq0$, we find an $\alpha_0^\mathcal{V}:=c'(H)c(H)^{-1}$ meeting the requirements.
    
    Now assume that \eqref{Eq_ToProveMinimal} is true for $i\in\NN$.
    Choose an arbitrary $\mathcal{V}\in\mathbb{H}_c^{i+1}$.
    As $\mathbb{H}_c^i$ is a refinement of $\mathbb{H}_c^{i+1}$, $\mathcal{V}=\mathcal{V}'_1\sqcup\dots\sqcup\mathcal{V}'_k$ for certain pairwise disjoint sets of hyperplanes $\mathcal{V}'_1,\dots,\mathcal{V}'_k\in\mathbb{H}_c^i$ ($k\in\{1,\dots,t_c\}$).
    Moreover, the elements of $\{\mathcal{V}'_1,\dots,\mathcal{V}'_k\}$ are precisely all the vertices of a connected component in the graph $\Gamma_{\mathbb{H}_c^i}$.
    Consider two elements from this set that are adjacent w.r.t.\ the graph $\Gamma_{\mathbb{H}_c^i}$; w.l.o.g.\ let these two elements be $\mathcal{V}'_1$ and $\mathcal{V}'_2$.
    By Definition \ref{Def_GraphAdjacency}, there exists a point $P$ such that
    \begin{enumerate}
        \item $c(P)=0$, which implies that $c'(P)=0$ as $\supp(c')\subseteq\supp(c)$,
        \item $\restr{c}{\mathcal{V}'_1}(P)\neq0\neq\restr{c}{\mathcal{V}'_2}(P)$, and
        \item for all $\mathcal{V}'\in\mathbb{H}_c^i\setminus\{\mathcal{V}'_1,\mathcal{V}'_2\}$, $\restr{c}{\mathcal{V}'}(P)=0$, implying that $\restr{c'}{\mathcal{V}'}(P)=0$ by the induction hypothesis.
    \end{enumerate}
    As $\mathbb{H}_c^i$ is a partition of $\mathcal{H}_c$, we know that $c=\restr{c}{\mathcal{H}_c}=\sum_{\mathcal{V}'\in\mathbb{H}_c^i}\restr{c}{\mathcal{V}'}$.
    Moreover, as $\mathcal{H}_{c'}\subseteq\mathcal{H}_c$ (Proposition \ref{Prop_SubsetHyperplanes}), we have $c'=\restr{c'}{\mathcal{H}_{c'}}=\restr{c'}{\mathcal{H}_c}=\sum_{\mathcal{V}'\in\mathbb{H}_c^i}\restr{c'}{\mathcal{V}'}$.
    Hence, by Property $1.$ and $3.$ above, we obtain
    \begin{align}
        0=c(P)=&\left(\sum_{\mathcal{V}'\in\mathbb{H}_c^i}\restr{c}{\mathcal{V}'}\right)(P)=\restr{c}{\mathcal{V}'_1}(P)+\restr{c}{\mathcal{V}'_2}(P)\textnormal{, and}\label{Eq_SumZero1}\\
        0=c'(P)=&\left(\sum_{\mathcal{V}'\in\mathbb{H}_c^i}\restr{c'}{\mathcal{V}'}\right)(P)=\restr{c'}{\mathcal{V}'_1}(P)+\restr{c'}{\mathcal{V}'_2}(P)\textnormal{.}\label{Eq_SumZero2}
    \end{align}
    By the induction hypothesis, there exist elements $\alpha_i^{\mathcal{V}'_1},\alpha_i^{\mathcal{V}'_2}\in\mathbb{F}_p$ such that $\restr{c'}{\mathcal{V}'_1}=\alpha_i^{\mathcal{V}'_1}\restr{c}{\mathcal{V}'_1}$ and $\restr{c'}{\mathcal{V}'_2}=\alpha_i^{\mathcal{V}'_2}\restr{c}{\mathcal{V}'_2}$.
    Combining this with \eqref{Eq_SumZero1}, \eqref{Eq_SumZero2} and the fact that $\restr{c}{\mathcal{V}'_1}(P)\neq0\neq\restr{c}{\mathcal{V}'_2}(P)$ (Property $2.$ above), we obtain that
    \[
        \alpha_i^{\mathcal{V}'_1}=\left(\restr{c}{\mathcal{V}'_1}(P)\right)^{-1}\restr{c'}{\mathcal{V}'_1}(P)=\left(-\restr{c}{\mathcal{V}'_2}(P)\right)^{-1}\left(-\restr{c'}{\mathcal{V}'_2}(P)\right)=\left(\restr{c}{\mathcal{V}'_2}(P)\right)^{-1}\restr{c'}{\mathcal{V}'_2}(P)=\alpha_i^{\mathcal{V}'_2}\textnormal{.}
    \]
    In conclusion, for any two elements $\mathcal{V}'_{j_1},\mathcal{V}'_{j_2}\in\{\mathcal{V}'_1,\dots,\mathcal{V}'_k\}$ that are adjacent w.r.t.\ the graph $\Gamma_{\mathbb{H}_c^i}$, their values $\alpha_i^{\mathcal{V}'_{j_1}}$ and $\alpha_i^{\mathcal{V}'_{j_2}}$ (found by the induction hypothesis) are equal.
    As the elements of $\{\mathcal{V}'_1,\dots,\mathcal{V}'_k\}$ are precisely the vertices of a connected component of $\Gamma_{\mathbb{H}_c^i}$, we can conclude that
    \[
        \restr{c'}{\mathcal{V}'_j}=\alpha_i^{\mathcal{V}'_1}\restr{c}{\mathcal{V}'_j}\textnormal{, }\forall j\in\{1,\dots,k\}\textnormal{.}
    \]
    As a direct consequence, by defining $\alpha_{i+1}^\mathcal{V}:=\alpha_i^{\mathcal{V}'_1}$, we conclude that, for any $\mathcal{V}\in\mathbb{H}_c^{i+1}$,
    \[
        \restr{c'}{\mathcal{V}}=\sum_{j=1}^k\restr{c'}{\mathcal{V}'_j}=\sum_{j=1}^k\alpha_{i+1}^\mathcal{V}\restr{c}{\mathcal{V}'_j}=\alpha_{i+1}^\mathcal{V}\sum_{j=1}^k\restr{c}{\mathcal{V}'_j}=\alpha_{i+1}^\mathcal{V}\restr{c}{\mathcal{V}}\textnormal{.}\qedhere
    \]
\end{proof}

\begin{df}
    Let $c\in\mc_{n-1}(n,q)$ with $\wt(c)\leq W(n,q)$.
    Then we define $\mathcal{P}_c^\infty$ as the set of all holes $P$ (of $c$) for which there exists an $\mathcal{V}\in\mathbb{H}_c^\infty$ such that $\restr{c}{\mathcal{V}}(P)\neq0$.
\end{df}

\begin{thm}\label{Thm_NotMinimal}
    Let $c\in\mc_{n-1}(n,q)$ with $\wt(c)\leq W(n,q)$ and suppose that $|\mathcal{P}_c^\infty|\leq |\mathbb{H}_c^\infty|-2$.
    Then $c$ is not minimal.
\end{thm}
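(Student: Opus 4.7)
The plan is to exhibit a codeword $c' \in \mc_{n-1}(n,q)$ with $\supp(c') \subseteq \supp(c)$ that is not a scalar multiple of $c$. I would search for such $c'$ in the family
\[
    c' := \sum_{\mathcal{H} \in \mathbb{H}_c^\infty} \beta_\mathcal{H} \restr{c}{\mathcal{H}},
\]
parametrised by coefficients $\beta_\mathcal{H} \in \mathbb{F}_p$. Each $\restr{c}{\mathcal{H}}$ is a linear combination of hyperplanes (Definition \ref{Def_RestrictedHyperplaneCodeword}), so $c'$ is automatically a codeword; choosing $\beta$ to be constant just recovers a scalar multiple of $c$, so the goal is to exhibit a \emph{non-constant} $\beta$ for which $\supp(c') \subseteq \supp(c)$.

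First, I would analyse the support condition. At any $P \in \supp(c)$ there is nothing to check. At a hole $P$ of $c$ with $P \notin \mathcal{P}_c^\infty$, every summand $\restr{c}{\mathcal{H}}(P)$ vanishes by definition of $\mathcal{P}_c^\infty$, so $c'(P) = 0$ automatically. The remaining conditions form the homogeneous $\mathbb{F}_p$-linear system
\[
    \sum_{\mathcal{H} \in \mathbb{H}_c^\infty} \beta_\mathcal{H} \restr{c}{\mathcal{H}}(P) = 0, \qquad P \in \mathcal{P}_c^\infty,
\]
with $|\mathcal{P}_c^\infty|$ equations in $|\mathbb{H}_c^\infty|$ unknowns. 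Its solution space has $\mathbb{F}_p$-dimension at least $|\mathbb{H}_c^\infty| - |\mathcal{P}_c^\infty| \geq 2$ by hypothesis. Since $\mathbb{H}_c^\infty$ partitions $\mathcal{H}_c$, one has $\sum_\mathcal{H} \restr{c}{\mathcal{H}} = c$ and hence $\sum_\mathcal{H} \restr{c}{\mathcal{H}}(P) = c(P) = 0$ at every hole $P$, so every constant vector $\beta \equiv \lambda$ is a solution. The constant solutions span only a $1$-dimensional subspace, so a non-constant $\beta$ does exist.

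Finally, I would check that the resulting $c'$ is not a scalar multiple of $c$. By Proposition \ref{Prop_SubsetHyperplanes}, each $H \in \mathcal{H}_c$ contains a point $P$ lying in no other hyperplane of $\mathcal{H}_c$; for such $P$ one has $c(P) = c(H)$ and, writing $\mathcal{H}$ for the block of $\mathbb{H}_c^\infty$ containing $H$, also $c'(P) = \beta_\mathcal{H} c(H)$. An identity $c' = \alpha c$ would therefore force $\beta_\mathcal{H} = \alpha$ for every block $\mathcal{H}$, contradicting the non-constancy of $\beta$. The only step that requires a small amount of care is verifying that $\beta \equiv 1$ genuinely lies in the solution space, i.e.\ that $c$ vanishes at each $P \in \mathcal{P}_c^\infty$; this is immediate from the fact that $\mathcal{P}_c^\infty$ consists of holes of $c$, and once it is in hand the argument reduces to the dimension count above combined with the unique-point machinery of Proposition \ref{Prop_SubsetHyperplanes}.
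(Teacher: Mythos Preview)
Your proposal is correct and follows essentially the same approach as the paper: both set up the homogeneous linear system $\sum_{\mathcal{H}} \beta_{\mathcal{H}}\,\restr{c}{\mathcal{H}}(P)=0$ over $P\in\mathcal{P}_c^\infty$, use the dimension count $|\mathbb{H}_c^\infty|-|\mathcal{P}_c^\infty|\geq 2$ to find a solution that is not a scalar multiple of the all-ones vector, and then verify that the resulting $c'$ has support inside $\supp(c)$. Your write-up is, if anything, slightly more explicit than the paper's in two places: you spell out why the constant vector lies in the solution space, and you invoke the unique-point argument from Proposition~\ref{Prop_SubsetHyperplanes} to justify that a non-constant $\beta$ really yields a $c'$ that is not a scalar multiple of $c$ (the paper states this last step without further comment).
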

\begin{proof}
    Define $h:=|\mathbb{H}_c^\infty|$ and $r:=|\mathcal{P}_c^\infty|$, and let $\mathbb{H}_c^\infty=\{\mathcal{V}_1,\dots,\mathcal{V}_h\}$ and $\mathcal{P}_c^\infty=\{P_1,\dots,P_r\}$.
    Consider the following system of $r$ linear equations over $\mathbb{F}_p$:
    \begin{equation}\label{Eq_SystemLinearEqs}
        \restr{c}{\mathcal{V}_1}(P_i)X_1+\restr{c}{\mathcal{V}_2}(P_i)X_2+\dots+\restr{c}{\mathcal{V}_h}(P_i)X_h=0\textnormal{,}\qquad i=1,2,\dots,r\textnormal{.}
    \end{equation}
    As $r\leq h-2$, the solution space of the above system of linear equations is a vector space over $\mathbb{F}_p$ of dimension at least two.
    Therefore, we can find a solution $(\alpha_1,\dots,\alpha_h)\in\mathbb{F}_p^h$ that is not a scalar multiple of $(1,\dots,1)\in\mathbb{F}_p^h$.
    Define
    \[
        c':=\alpha_1\restr{c}{\mathcal{V}_1}+\alpha_2\restr{c}{\mathcal{V}_2}+\dots+\alpha_h\restr{c}{\mathcal{V}_h}\textnormal{.}
    \]
    By the choice of $(\alpha_1,\dots,\alpha_h)$, $c'$ is not a scalar multiple of $c=\restr{c}{\mathcal{V}_1}+\restr{c}{\mathcal{V}_2}+\dots+\restr{c}{\mathcal{V}_h}$.
    Hence, once we verify that $\supp(c')\subseteq\supp(c)$, the proof is done.
    
    Consider a hole $Q$ of $c$.
    Then either $Q\in\mathcal{P}_c^\infty$ or $Q\notin\mathcal{P}_c^\infty$.
    If $Q\in\mathcal{P}_c^\infty$, then $Q=P_{i'}$ for an $i'\in\{1,2,\dots,r\}$.
    As $(\alpha_1,\dots,\alpha_h)$ is a solution to System \eqref{Eq_SystemLinearEqs},  $c'(Q)=\alpha_1\restr{c}{\mathcal{V}_1}(P_{i'})+\dots+\alpha_h\restr{c}{\mathcal{V}_h}(P_{i'})=0$.
    If $Q\notin\mathcal{P}_c^\infty$, then $\restr{c}{\mathcal{V}}(Q)=0$ for any $\mathcal{V}\in\mathbb{H}_c^\infty$, hence $c'(Q)=\alpha_1\restr{c}{\mathcal{V}_1}(Q)+\dots+\alpha_h\restr{c}{\mathcal{V}_h}(Q)=0+\dots+0=0$.
    In conclusion, $c'(Q)=0$.
    As $Q$ was an arbitrary hole of $c$, we obtain that the complement of $\supp(c)$ is contained in the complement of $\supp(c')$, hence $\supp(c')\subseteq\supp(c)$.
\end{proof}

\begin{crl}
    Let $c\in\mc_{n-1}(n,q)$ with $\wt(c)\leq W(n,q)$ and suppose that $|\mathbb{H}_c^\infty|=2$.
    Then $c$ is not minimal.
\end{crl}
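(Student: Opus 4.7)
The plan is to reduce the statement directly to Theorem \ref{Thm_NotMinimal} by showing that $\mathcal{P}_c^\infty=\emptyset$, so that $|\mathcal{P}_c^\infty|=0\leq 2-2=|\mathbb{H}_c^\infty|-2$ and the hypothesis of that theorem is met.

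First, write $\mathbb{H}_c^\infty=\{\mathcal{H}_1,\mathcal{H}_2\}$. By Construction \ref{Constr_HyperplaneGraphPartition}, the partition $\mathbb{H}_c^\infty$ is a fixed point of the recursion, that is, applying one further step of the construction yields the same set. This forces each element of $\mathbb{H}_c^\infty$ to coincide with the union of the vertices of a connected component of $\Gamma_{\mathbb{H}_c^\infty}$; since there are exactly two elements, each must be its own connected component, which means that $\mathcal{H}_1$ and $\mathcal{H}_2$ are \emph{not} adjacent in $\Gamma_{\mathbb{H}_c^\infty}$.

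Next I would translate this non-adjacency via Definition \ref{Def_GraphAdjacency}. Since $\mathbb{H}_c^\infty$ contains only $\mathcal{H}_1$ and $\mathcal{H}_2$, the third clause of the adjacency definition is vacuous, so non-adjacency is equivalent to the statement that no point $P\in\pg(n,q)$ simultaneously satisfies $c(P)=0$, $\restr{c}{\mathcal{H}_1}(P)\neq0$ and $\restr{c}{\mathcal{H}_2}(P)\neq0$. Now pick any hole $P$ of $c$. Because $\mathbb{H}_c^\infty$ is a partition of $\mathcal{H}_c$, we have $c=\restr{c}{\mathcal{H}_1}+\restr{c}{\mathcal{H}_2}$, so
\[
    \restr{c}{\mathcal{H}_1}(P)+\restr{c}{\mathcal{H}_2}(P)=c(P)=0\textnormal{,}
\]
which shows that the two values $\restr{c}{\mathcal{H}_1}(P)$ and $\restr{c}{\mathcal{H}_2}(P)$ vanish or are non-zero simultaneously. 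The non-adjacency above excludes the second possibility, hence both values are zero. As $P$ was an arbitrary hole, no hole contributes to $\mathcal{P}_c^\infty$, proving $\mathcal{P}_c^\infty=\emptyset$.

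Finally I apply Theorem \ref{Thm_NotMinimal} with $|\mathcal{P}_c^\infty|=0$ and $|\mathbb{H}_c^\infty|=2$, concluding that $c$ is not minimal. I do not expect any real obstacle: the work has been absorbed into Theorem \ref{Thm_NotMinimal} and the construction of $\mathbb{H}_c^\infty$. The only subtlety worth stating explicitly is the equivalence between $\mathbb{H}_c^\infty$ being a fixed point of the recursion and the vertices of $\Gamma_{\mathbb{H}_c^\infty}$ being pairwise non-adjacent when $|\mathbb{H}_c^\infty|=2$.
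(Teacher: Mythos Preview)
Your proposal is correct and follows essentially the same approach as the paper: both arguments reduce to Theorem \ref{Thm_NotMinimal} by showing $\mathcal{P}_c^\infty=\emptyset$, using the identity $c=\restr{c}{\mathcal{H}_1}+\restr{c}{\mathcal{H}_2}$ together with the fixed-point property of $\mathbb{H}_c^\infty$ (equivalently, non-adjacency of $\mathcal{H}_1$ and $\mathcal{H}_2$ in $\Gamma_{\mathbb{H}_c^\infty}$). The paper phrases this as a contradiction while you argue directly, but the content is the same.
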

\begin{proof}
    By Theorem \ref{Thm_NotMinimal}, it suffices to prove that $\mathcal{P}_c^\infty=\emptyset$.
    Let  $\mathbb{H}_c^\infty=\{\mathcal{V}_1,\mathcal{V}_2\}$ and suppose, to the contrary, that there exists a point $P\in\mathcal{P}_c^\infty$.
    W.l.o.g.\ we can assume that $\restr{c}{\mathcal{V}_1}(P)\neq0$.
    As $P\in\mathcal{P}_c^\infty$, we know that $P$ is a hole of $c$, hence $0=c(P)=\restr{c}{\mathcal{V}_1}(P)+\restr{c}{\mathcal{V}_2}(P)$.
    As $\restr{c}{\mathcal{V}_1}(P)\neq0$, $\restr{c}{\mathcal{V}_2}(P)\neq0$ as well.
    However, by Construction \ref{Constr_HyperplaneGraphPartition}, this would imply that $|\mathbb{H}_c^\infty|=1$, a contradiction.
\end{proof}

It seems reasonable to conjecture that, if $p>3$, the bound on the size of $\mathcal{P}_c^\infty$ depicted in Theorem \ref{Thm_NotMinimal} is sharp.

\begin{thm}\label{Th:example}
    Suppose that $p>3$.
    Then there exists a minimal codeword $c\in\mc_1(2,q)$ with $\wt(c)\leq W(2,q)$, $|\mathbb{H}_c^\infty|=3$ and $|\mathcal{P}_c^\infty|=2$.
\end{thm}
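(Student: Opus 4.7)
The plan is to exhibit an explicit minimal codeword realising the extremal configuration. In $\pg(2,q)$ I fix two distinct points $P,Q$ and six lines $L_1,\ldots,L_6$ with the \emph{asymmetric} incidence pattern $\{L_1,L_3,L_5\}\ni P$ and $\{L_2,L_3,L_4,L_6\}\ni Q$---so that $L_3$ is the unique common line---and no further coincidences; such a configuration exists once $q$ is large enough. For coefficients $\alpha,\beta,\gamma\in\mathbb{F}_p^*$, I define
\[
    c := \alpha f_{L_1} - \alpha f_{L_2} + \beta f_{L_3} + \gamma f_{L_4} - \gamma f_{L_5} + \gamma f_{L_6}.
\]
Demanding that both $P$ and $Q$ be holes of $c$ produces the two linear relations $\alpha+\beta-\gamma=0$ and $-\alpha+\beta+2\gamma=0$, which cut out the one-parameter family $(\alpha,\beta,\gamma)=\beta(-3,1,-2)$. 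The hypothesis $p>3$ ensures we may pick $\beta\in\mathbb{F}_p^*$ so that the three coefficients are nonzero, pairwise distinct, and so that their pairwise sums (e.g.\ $\alpha+\gamma=-5\beta$) are nonzero---this last condition precisely preventing spurious holes at cross-intersections of the six lines.

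With this setup I verify the two structural properties. The only pair-intersections that are holes of $c$ are $L_1\cap L_2$, $L_4\cap L_5$ and $L_5\cap L_6$, giving exactly the three edges $\{L_1\}\!-\!\{L_2\}$, $\{L_4\}\!-\!\{L_5\}$, $\{L_5\}\!-\!\{L_6\}$ in $\Gamma_{\mathbb{H}_c^0}$; hence $\mathbb{H}_c^1=\{\{L_1,L_2\},\{L_3\},\{L_4,L_5,L_6\}\}$. A short case-check of the three potential adjacencies in $\Gamma_{\mathbb{H}_c^1}$ rules out all further mergers, so $|\mathbb{H}_c^\infty|=3$. Evaluating each restriction $\restr{c}{\mathcal{H}_i}$ at every hole of $c$ confirms that $P$ and $Q$ are the only ones at which all three restrictions are simultaneously nonzero, so $\mathcal{P}_c^\infty=\{P,Q\}$. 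Finally, inclusion–exclusion on the six-line arrangement gives $\wt(c)=6q-10$, which is well below $W(2,q)$ under the standing assumption $q>27$.

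The heart of the proof is minimality. Given $c'\in\mc_1(2,q)$ with $\supp(c')\subseteq\supp(c)$, Proposition \ref{Prop_SubsetHyperplanes} forces $c'=\sum_i\lambda_i f_{L_i}$. Imposing $c'(R)=0$ at the five holes $L_1\cap L_2$, $L_4\cap L_5$, $L_5\cap L_6$, $P$ and $Q$ yields a $5\times 6$ linear system; the three pair-hole equations give $\lambda_2=-\lambda_1$, $\lambda_5=-\lambda_4$, $\lambda_6=\lambda_4$, and after these substitutions the $P$- and $Q$-equations collapse to the single nondegenerate relation $3\lambda_4=2\lambda_1$---which is solvable precisely because $p>3$ makes $3$ invertible. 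Hence $c'=\mu c$ for some $\mu\in\mathbb{F}_p$, proving $c$ is minimal. The critical feature, and the chief obstacle one has to spot in designing the example, is the asymmetric count of lines through $P$ versus $Q$: in a symmetric $3+3$ configuration the $P$- and $Q$-equations would coincide, leaving a two-dimensional solution space and destroying minimality.
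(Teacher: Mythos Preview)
Your construction has a genuine gap at $p=5$. Once you solve the two hole equations at $P$ and $Q$, the coefficient vector is forced to lie on the single line $(\alpha,\beta,\gamma)=\beta(-3,1,-2)$; there is no remaining freedom beyond the global scalar $\beta$. In particular $\alpha+\gamma=-5\beta$ for \emph{every} choice of $\beta$, so when $p=5$ this sum vanishes identically and cannot be made nonzero. Your sentence ``the hypothesis $p>3$ ensures we may pick $\beta\in\mathbb{F}_p^*$ so that \ldots\ their pairwise sums are nonzero'' is therefore false: the hypothesis you actually need is $p\notin\{2,3,5\}$.

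For $p=5$ the three cross-intersections $L_1\cap L_4$, $L_1\cap L_6$, $L_2\cap L_5$ become additional holes of $c$, each lying on exactly two of the six lines. These produce extra edges in $\Gamma_{\mathbb{H}_c^0}$ that merge $\{L_1,L_2\}$ with $\{L_4,L_5,L_6\}$, giving $\mathbb{H}_c^1=\big\{\{L_1,L_2,L_4,L_5,L_6\},\{L_3\}\big\}$. At the next stage the hole $P$ (where $\restr{c}{\{L_3\}}(P)=\beta\neq0$ and $\restr{c}{\{L_1,L_2,L_4,L_5,L_6\}}(P)=\alpha-\gamma=-\beta\neq0$) joins the two remaining classes, so $|\mathbb{H}_c^\infty|=1$ and $\mathcal{P}_c^\infty=\emptyset$. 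The codeword is still minimal, but it no longer witnesses the configuration $|\mathbb{H}_c^\infty|=3$, $|\mathcal{P}_c^\infty|=2$ that the theorem asserts.

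The paper's example sidesteps this entirely by using seven lines with coefficients $\pm1$: two triples $r_1,r_2,r'$ and $s_1,s_2,s'$ through distinct points $R,S$ of a line $t$, and $c=r_1+r_2-r'+s_1+s_2-s'-t$. The only cross-intersection values that occur are $0$ and $\pm2$, so the partition calculation is uniform for all $p>2$, and the minimality check reduces (after the obvious substitutions) to $3a'=3b'$, which pins things down precisely when $p\neq3$. Thus the paper's construction works for every $p>3$, including $p=5$.
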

\begin{proof}
    We describe the codeword illustrated in Figure \ref{Fig_CounterExample}.
    Consider a line $t$ and let $R,S\in t$ be two distinct points.
    Let $r_1$, $r_2$ and $r'$ be three distinct lines through $R$, different from $t$, and let $s_1$, $s_2$ and $s'$ be three distinct lines through $S$, different from $t$.
    Define $c:=r_1+r_2-r'+s_1+s_2-s'-t$.
    
    One can check that $\mathbb{H}_c^\infty=\left\{\{r_1,r_2,s'\},\{s_1,s_2,r'\},\{t\}\right\}$ and that $\mathcal{P}_c^\infty=\{R,S\}$.
    One can also manually check that this codeword is minimal.
\end{proof}

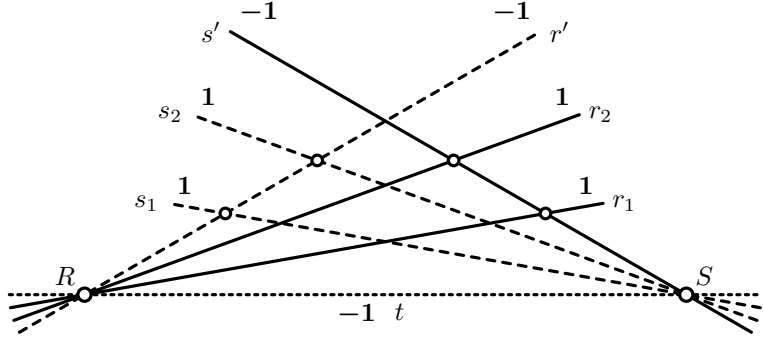
\begin{figure}
    \begin{center}\begin{tikzpicture}
        % lijn t
        \draw[very thick, dotted, line join=round, line cap=round] (-5,0) -- (5,0);
        \node[draw=none, fill=none, anchor=north west] at (0,0) {$t$};
        \node[draw=none, fill=none, anchor=north east] at (0,0) {$\boldsymbol{-1}$};
        
        % lijnen r1, r2, r' en punt R
        \draw[very thick, line join=round, line cap=round, name path=r1] ($(-4,0)+(180+10:1)$) -- ($(-4,0)+(10:7)$);
        \draw[very thick, line join=round, line cap=round, name path=r2] ($(-4,0)+(180+20:1)$) -- ($(-4,0)+(20:7)$);
        \draw[very thick, dashed, line join=round, line cap=round, name path=rr] ($(-4,0)+(180+30:1)$) -- ($(-4,0)+(30:7)$);
        \draw[very thick, fill=white] (-4,0) circle (2.5pt);
        \node[draw=none, fill=none, anchor=south east] at (-4,0) {$R$};
        \node[draw=none, fill=none, anchor=west] at ($(-4,0)+(10:7)$) {$r_1$};
        \node[draw=none, fill=none, anchor=west] at ($(-4,0)+(20:7)$) {$r_2$};
        \node[draw=none, fill=none, anchor=west] at ($(-4,0)+(30:7)$) {$r'$};
        \node[draw=none, fill=none, anchor=south east] at ($(-4,0)+(10:7)$) {$\boldsymbol{1}$};
        \node[draw=none, fill=none, anchor=south east] at ($(-4,0)+(20:7)$) {$\boldsymbol{1}$};
        \node[draw=none, fill=none, anchor=south east] at ($(-4,0)+(30:7)$) {$\boldsymbol{-1}$};
        
        % lijnen s1, s2, s' en punt s
        \draw[very thick, dashed, line join=round, line cap=round, name path=s1] ($(4,0)+(360-10:1)$) -- ($(4,0)+(180-10:7)$);
        \draw[very thick, dashed, line join=round, line cap=round, name path=s2] ($(4,0)+(360-20:1)$) -- ($(4,0)+(180-20:7)$);
        \draw[very thick, line join=round, line cap=round, name path=ss] ($(4,0)+(360-30:1)$) -- ($(4,0)+(180-30:7)$);
        \draw[very thick, fill=white] (4,0) circle (2.5pt);
        \node[draw=none, fill=none, anchor=south west] at (4,0) {$S$};
        \node[draw=none, fill=none, anchor=east] at ($(4,0)+(180-10:7)$) {$s_1$};
        \node[draw=none, fill=none, anchor=east] at ($(4,0)+(180-20:7)$) {$s_2$};
        \node[draw=none, fill=none, anchor=east] at ($(4,0)+(180-30:7)$) {$s'$};
        \node[draw=none, fill=none, anchor=south west] at ($(4,0)+(180-10:7)$) {$\boldsymbol{1}$};
        \node[draw=none, fill=none, anchor=south west] at ($(4,0)+(180-20:7)$) {$\boldsymbol{1}$};
        \node[draw=none, fill=none, anchor=south west] at ($(4,0)+(180-30:7)$) {$\boldsymbol{-1}$};
        
        % intersection holes
        \draw [very thick, fill=white, name intersections={of=r1 and ss}] (intersection-1) circle (2pt);
        \draw [very thick, fill=white, name intersections={of=r2 and ss}] (intersection-1) circle (2pt);
        \draw [very thick, fill=white, name intersections={of=s1 and rr}] (intersection-1) circle (2pt);
        \draw [very thick, fill=white, name intersections={of=s2 and rr}] (intersection-1) circle (2pt);
    \end{tikzpicture}\end{center}
    \caption{An example of a codeword $c\in\mc_1(2,q)$, $p>3$, that proves the sharpness of the bound of Theorem \ref{Thm_NotMinimal} for the case $n=2$, $|\mathbb{H}_c^\infty|=3$ (see Theorem \ref{Th:example}).}
    \label{Fig_CounterExample}
\end{figure}

\section{Open Problems}\label{Sec:Open}
We conclude this work with a few open problems.

\begin{open}
It would be interesting to generalise the example of Theorem \ref{Th:example} (Figure \ref{Fig_CounterExample}) to higher dimensions $n\geq3$.
Moreover, one could raise the question whether there exist examples of minimal codewords $c\in\mc_1(2,q)$, $\wt(c)\leq W(2,q)$, for which $|\mathcal{P}_c^\infty|=|\mathbb{H}_c^\infty|-1$, with $|\mathbb{H}_c^\infty|\geq4$ (with a generalisation to $n\geq3$ as well).
\end{open}

\begin{open}
If $p=2$, one can observe that the following codewords of $\mc_1(2,q)$ are not minimal.
\begin{enumerate}
    \item At least three lines through a common point.
    \item At least three lines, one of which contains no holes.
\end{enumerate}
In all other cases, the codewords seem to be minimal.
It would be interesting if one could prove that this is true for codewords of $\mc_1(2,2^h)$ up to a relatively large weight.
\end{open}

\textbf{Acknowledgements.}
The research of Daniele Bartoli was supported by the Italian National Group for Algebraic and Geometric Structures and their Applications (GNSAGA - INdAM).

\bibliographystyle{plain}
\bibliography{main}

\end{document}